\documentclass[11pt, reqno]{amsart}
\usepackage[all,tips]{xy}
\usepackage{latexsym, amsfonts, amsmath, amssymb, mathrsfs, graphicx, xcolor, ytableau, hyperref, float}
\usepackage[justification=centering]{caption}
\allowdisplaybreaks
\usepackage{centernot}
\usepackage{tikz}
\usepackage{pgfplots}

\definecolor{red}{rgb}{1,0,0}
\definecolor{magenta}{rgb}{1,0,1}
\definecolor{dartmouthgreen}{rgb}{0.05, 0.5, 0.06}
\definecolor{purple(x11)}{rgb}{0.63,0.36,0.94}
\definecolor{turquoise}{rgb}{0.25, 0.87, 0.81}
\newtheorem{theorem}{Theorem}[section]
\newtheorem{lemma}[theorem]{Lemma}
\newtheorem{proposition}[theorem]{Proposition}
\newtheorem{corollary}[theorem]{Corollary}
\newtheorem{conjecture}[theorem]{Conjecture}

\newtheorem*{question}{Question}

\theoremstyle{definition}
\newtheorem{remark}[theorem]{Remark}

\newcommand{\Cal}[1]{\ensuremath{\mathcal{#1}}}

\newcommand{\lp}{\left(}
\newcommand{\rp}{\right)}

\newcommand{\N}{\mathbb N}
\usepackage[textheight=8.75in, textwidth=6.75in]{geometry}

\def\C{{\mathbb C}}

\def\Z{{\mathbb Z}}

\def\Q{{\mathbb Q}}

\def\O_K{{\Cal{O}_{K}}}
\def\O_F{{\Cal{O}_{F}}}
\def\N_F{{\Cal{N}_{F/\Q}}}

\newcommand{\ord}{\mathrm{ord}}

\newcommand{\legendre}[2]{\genfrac{(}{)}{}{}{#1}{#2}}

\def\O_K{{\Cal{O}_{K}}}
\def\O_F{{\Cal{O}_{F}}}
\def\N_F{{\Cal{N}_{F/\Q}}}

\numberwithin{equation}{section}
\numberwithin{theorem}{section}

\title{Universal Ramanujan-type congruences for prime-detecting quasimodular forms}

\author{Bailey Chrobak}
\address{ Sussex Central High School, 26026 Patriots Way, Georgetown, DE 19947}
\email{baileychrobak@icloud.com}

\author{William Craig}
\address{Department of Mathematics, United States Naval Academy, 572C Holloway Road
Mail Stop 9E. Annapolis, MD 21402}
\email{wcraig@usna.edu}

\begin{document}

\maketitle

\begin{abstract}
    We establish the existence of infinitely many Ramanujan-type congruences which hold uniformly for the full $\Z$-module of prime detecting quasimodular forms. In particular, we prove that for every prime $p$, there is at least one arithmetic progression $An+B$ such that every prime-detecting quasimodular form has coefficients divisible by $p$ in this progression.
\end{abstract}

\section{Introduction}

Let $p(n)$ denote the number of partitions of $n$; that is, the number of distinct ways to represent $n$ as a sum of positive integers, treating two representations as identical if they differ only in permuting the summands. The partition function, and its many variations, are some of the most important functions studied in number theory and combinatorics. Among the most famous facts about partitions are Ramanujan's congruence relations \cite{Ramanujan}
\begin{align*}
    p\lp 5n+4 \rp &\equiv 0 \pmod{5}, \\
    p\lp 7n+5 \rp &\equiv 0 \pmod{7}, \\
    p\lp 11n+6 \rp &\equiv 0 \pmod{11}.
\end{align*}
The tools employed by Ramanujan and others after him in proving these conjectures motivated the study of similar congruences for families of {\it modular forms} and {\it quasimodular forms}. For the purposes of this paper, we will be concerned only with positive weight quasimodular forms for the full congruence subgroup $\mathrm{SL}_2(\Z)$, and we define such forms as elements of the algebra
\begin{align*}
    \mathcal M := \C[G_2, G_4, G_6, G_8, G_{10}, \dots],
\end{align*}
where $G_k$ are the Eisenstein series of weight $k$, defined for $k \geq 2$ even by
\begin{align*}
    G_k(z) := -\dfrac{B_k}{2k} + \sum_{n \geq 1} \sigma_{k-1}(n) q^n,
\end{align*}
where $q := e^{2\pi i z}$, $B_k$ are the Bernoulli numbers, and $\sigma_{k-1}(n) := \sum_{d|n} d^{k-1}$. We will use the notation $\mathcal M^\Z$ to denote the submodule $\mathcal M \cap \lp \Q \oplus q\Z[[q]] \rp$ of $\mathcal M$, i.e. of those quasimodular forms whose non-constant coefficients are integers and whose constant term is rational. We note that every $G_k$ belongs to $\mathcal M^\Z$, and if we define the differential operator
\begin{align*}
    D := q \dfrac{d}{dq},
\end{align*}
both $\mathcal M$ and $\mathcal M^\Z$ are closed under the action of $D$. For an overview of the basic theory of quasimodular forms, see for instance \cite{KanekoZagier}.

We will be concerned with congruences like Ramanujan's for a certain $\Z$-submodule $\Omega$ of $\mathcal M^\Z$ defined by the first author with van Ittersum and Ono \cite{CraigIttersumOno} as the space of {\it prime-detecting quasimodular forms}. We call an element $f = \sum_{n \geq 0} a_f(n) q^n \in \mathcal M^\Z$ a {\it proper prime-detecting quasimodular form}\footnote{In \cite{CraigIttersumOno}, this was the definition of a prime-detecting form.} if, for $n \geq 2$, we have $a_f(n) \geq 0$ with equality if and only if $n$ is prime, and we call $f$ a {\it prime-detecting quasimodular form} if $f$ belongs to the $\Z$-module $\Omega$ generated by all proper prime-detecting forms. Combining the main results of \cite{CraigIttersumOno} and of Kane, Krishnamoorthy and Lau \cite{KaneKrishnamoorthyLau}, the elements of the $\Z$-module $\Omega$ are now classified; every $f \in \Omega$ is a finite $\Z$-linear combination of certain proper prime-detecting forms $H_k$
for even $k \geq 6$ along with their derivatives $D^m H_k$ for $m \geq 1$. The forms $H_k$ are defined by
\begin{align*}
    H_k := \sum_{n \geq 0} b_k(n) q^n,
\end{align*}
where
\begin{align*}
    b_k(n) := \begin{cases}
        \dfrac{(n^2-n+1) \sigma_1(n) - \sigma_3(n)}{6} & \textrm{for }k = 6, \\
        \dfrac{-n^2 \sigma_{k-7}(n) + (n^2+1) \sigma_{k-5}(n) - \sigma_{k-3}(n)}{24} & \textrm{for } k \geq 8.
    \end{cases}
\end{align*}
It is shown in \cite[Lemma 12]{CraigIttersumOno} that each $D^m H_k$ has integer coefficients with no common divisor, i.e. there is no prime $p$ for which $p|b_k(n)$ for every $n$.

When studying the existence of Ramanujan congruences for the quasimodular forms $H_k$, a striking pattern emerges. Not only can we identify many such congruences for individual forms $H_k$, even some with comparatively large primes, but there are many examples of congruences that hold {\it uniformly} for all $H_k$ at once. We refer to these congruences as {\it universal Ramanujan-type congruences} for $\Omega$. Since the functions $b_k(n)$ are constructed from divisor sums and multiples of $n$, these congruences are closely linked to the framework of Ramanujan-type congruences for multiplicative functions studied by the second author and Merca \cite{CraigMerca}.

Given the infinitude of $H_k$ available, it would be natural to ask whether there can be an infinite number of such congruences. We can answer this question in the affirmative using Serre's theory of $\ell$-adic Galois representations associated to modular forms, in a way analogous to the work of Ono \cite{Ono} on congruences for $p(n)$.

\begin{theorem} \label{Thm: Infinitely many uniform congruences}
    Let $M \geq 2$ be an integer. Then there are infinitely many non-nested arithmetic progressions $An+B$ such that 
    \begin{align*}
        b_k(An+B) \equiv 0 \pmod{M}
    \end{align*}
    for all even $k \geq 6$ and for all $n \geq 0$.
\end{theorem}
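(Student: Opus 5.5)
The plan is to avoid Galois representations and instead exploit the multiplicativity of the divisor sums together with a single auxiliary prime $\ell \equiv -1 \pmod{24M}$. The key observation is that every divisor sum occurring in $b_k(n)$ has odd index. For $k = 6$ these are $\sigma_1$ and $\sigma_3$. For even $k \geq 8$ they are $\sigma_{k-7}$, $\sigma_{k-5}$ and $\sigma_{k-3}$, and $k-7,k-5,k-3$ are odd because $k$ is even. So it suffices to make $\sigma_j(n)$ vanish modulo a suitable multiple of $M$ for \emph{every} odd $j \geq 1$ simultaneously, along an arithmetic progression. The powers $n^2$ and $n^2+1$ multiply these sums and can be ignored.

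First, by Dirichlet's theorem on primes in arithmetic progressions, there are infinitely many primes $\ell$ with $\ell \equiv -1 \pmod{24M}$. Fix such an $\ell$. For any $n$ with $\ell \,\|\, n$, write $n = \ell m$ with $\ell \nmid m$. Multiplicativity gives
\begin{align*}
\sigma_j(n) = (1+\ell^j)\,\sigma_j(m),
\end{align*}
and for odd $j$ we have $1 + \ell^j \equiv 1 + (-1)^j = 0 \pmod{24M}$. Hence every $\sigma_j(n)$ with $j$ odd is divisible by $24M$.

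Consequently, for $k \geq 8$ the numerator $-n^2\sigma_{k-7}(n) + (n^2+1)\sigma_{k-5}(n) - \sigma_{k-3}(n)$ is divisible by $24M$. Dividing by $24$ gives $b_k(n) \equiv 0 \pmod{M}$. For $k = 6$ the numerator $(n^2-n+1)\sigma_1(n) - \sigma_3(n)$ is divisible by $24M$, hence by $6M$, so $b_6(n) \equiv 0 \pmod M$ as well. Here we use that $b_k(n)$ is an integer, which is known from \cite{CraigIttersumOno}. The congruence is uniform in $k$ because the choice of $\ell$ does not depend on $k$.

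It remains to encode the condition $\ell \,\|\, n$ as an arithmetic progression. Take $A = \ell^2$ and $B = \ell r$ for any fixed $1 \leq r \leq \ell - 1$. Then every $An + B = \ell(\ell n + r)$, including the term with $n = 0$, is exactly divisible by $\ell$, and the argument above gives $b_k(An+B) \equiv 0 \pmod M$ for all even $k \geq 6$ and all $n \geq 0$.

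Letting $\ell$ range over the infinitely many admissible primes produces infinitely many progressions. The only point needing care, and the main obstacle, is to check that these progressions are non-nested. Take two of them, $\ell^2 n + \ell r$ and $\ell'^2 n + \ell' r'$ with $\ell \neq \ell'$. Every term of the first progression is divisible by $\ell$. By the Chinese remainder theorem we can choose $n$ so that $\ell'(\ell' n + r')$ is not divisible by $\ell$, so the second progression is not contained in the first. Exchanging the roles of $\ell$ and $\ell'$ shows the first is not contained in the second either. If one wants the progressions pairwise disjoint as well, we can use one $r$ per prime and observe that distinct primes give progressions whose elements have different exact prime-divisibility patterns.
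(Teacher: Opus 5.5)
Your proof is correct, and it takes a genuinely different and far more elementary route than the paper.

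The paper deduces the statement from Theorem~\ref{Thm: Infinitely many congruences}. That is a sketched Serre/Ono-type result, using Galois representations and the Chebotarev density theorem, which gives infinitely many congruences for each $H_k$ separately. Uniformity in $k$ then comes from arguing that the $H_k$ occupy only finitely many classes modulo $M$ (periodicity in $k$ via Euler's theorem) and intersecting progressions by the Chinese Remainder Theorem.

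You instead notice that every divisor sum in $b_k$ has odd index, so one prime $\ell \equiv -1 \pmod{24M}$ kills all of them at once through the factor $1+\ell^j$. This has several advantages:
\begin{itemize}
\item uniformity in $k$ is automatic, and the delicate periodicity step is avoided entirely;
\item the denominators $6$ and $24$ are absorbed into the modulus;
\item it gives explicit progressions $\ell^2 n + \ell r$ for every modulus $M$, not only the prime moduli of Theorem~\ref{Thm: mult congruences corollary}.
\end{itemize}
It is the same multiplicative mechanism as in Section~\ref{Sec: Multiplicative}, with the prime power $2^{dp-1}$ replaced by a prime $\ell$. It is also exactly what the Serre--Ono argument collapses to for Eisenstein series: their Hecke eigenvalue $1+\ell^{j}$ at $\ell$ vanishes modulo $M$ for all odd $j$ once $\ell \equiv -1 \pmod{M}$.

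What the paper's approach buys in exchange is generality. Theorem~\ref{Thm: Infinitely many congruences} applies to arbitrary integral quasimodular forms, including those with cuspidal components, whereas your argument uses that $b_k(n)$ is a polynomial combination of odd-index divisor sums.

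One minor correction: your optional closing remark on disjointness is false. Take $r=1$ for every prime, and choose $t$ with $t \equiv (\ell')^{-1} \pmod{\ell}$ and $t \equiv \ell^{-1} \pmod{\ell'}$. Then $\ell\ell' t$ lies in both $\ell^2 n+\ell$ and $(\ell')^2 n+\ell'$. This does not affect the theorem, which asks only for non-nested progressions, and your non-nestedness argument is sound.
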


This method of proof is, however, far from explicit. It is natural, therefore, to pursue explicit results which instantiate Theorem \ref{Thm: Infinitely many uniform congruences} by other methods. The main body of our paper is dedicated precisely to this task. The culmination of our explicit results on Theorem \ref{Thm: Infinitely many uniform congruences} may be summarized in the following, which is an immediate consequence of Theorems \ref{Thm: mod 3 congruences}, \ref{Thm: even progressions}, and \ref{Thm: Multiplicative Congruences}.

\begin{theorem} \label{Thm: mult congruences corollary}
    Let $p$ be a prime. Then we may construct explicit infinite families of non-nested arithmetic progressions $An+B$ such that 
    \begin{align*}
        b_k(An+B) \equiv 0 \pmod{p}
    \end{align*}
    for all even $k \geq 6$ and for all $n \geq 0$.
\end{theorem}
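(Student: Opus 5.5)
The plan is to obtain uniform congruences from multiplicativity of the divisor functions, exploiting the fact that every divisor sum appearing in $b_k(n)$ has \emph{odd} index. For $k\ge 8$ even, the indices $k-7$, $k-5$, $k-3$ are odd. For $k=6$, the indices $1$ and $3$ are odd. So it suffices to force $\sigma_j(n)$ to vanish modulo a suitable modulus simultaneously for all odd $j$. The polynomial factors $n^2$ and $n^2\pm n+1$ are integers and can then be ignored.

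The key observation is the following. Suppose $\ell$ is a prime with $\ell \equiv -1 \pmod{N}$, and $\ell \,\|\, n$ (i.e.\ $n=\ell m$ with $\ell\nmid m$). Then
\begin{align*}
\sigma_j(n)=\sigma_j(\ell)\,\sigma_j(m)=(1+\ell^j)\,\sigma_j(m),
\end{align*}
and for odd $j$ we have $1+\ell^j\equiv 1+(-1)^j = 0 \pmod{N}$. Hence $N\mid\sigma_j(n)$ for every odd $j$.

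To absorb the denominators $24$ and $6$ in the definition of $b_k(n)$, I will take $N=24p$. Then for $k\ge 8$,
\begin{align*}
24\,b_k(n) = -n^2\sigma_{k-7}(n)+(n^2+1)\sigma_{k-5}(n)-\sigma_{k-3}(n)\equiv 0 \pmod{24p},
\end{align*}
so $b_k(n)\equiv 0\pmod p$. Similarly $6\,b_6(n)\equiv 0\pmod{24p}$, which gives $4p\mid b_6(n)$. This handles all primes $p$ uniformly, including $p=2,3$, where the naive choice $\ell\equiv -1\pmod p$ would fail because of the denominators.

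Next I construct the progressions. By Dirichlet's theorem there are infinitely many primes $\ell\equiv -1\pmod{24p}$. For each such $\ell$ and each residue $r$ with $1\le r<\ell$, take
\begin{align*}
An+B = \ell^2 n + \ell r = \ell(\ell n + r).
\end{align*}
Every term is exactly divisible by $\ell$, since $\ell n+r\equiv r\not\equiv 0 \pmod{\ell}$. By the observation above, $b_k(\ell^2 n+\ell r)\equiv 0\pmod p$ for all even $k\ge 6$ and all $n\ge 0$.

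It remains to check that the family is explicit and non-nested. The progressions $\ell^2\mathbb{Z}+\ell r$ attached to distinct primes $\ell,\ell'$ are not contained in one another: membership in the first forces $\ell\,\|\,n$, while the second contains elements divisible by $\ell^2$ (for instance by CRT, since $\gcd(\ell'^2,\ell^2)=1$). A similar argument applies for distinct $r$ with the same $\ell$. Explicitness comes from listing primes in the class $-1 \bmod 24p$; for concreteness one can exhibit the smallest few.

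I expect the main obstacle to be the bookkeeping in two places. The first is the denominators, and in particular making sure the chosen modulus $24p$ suffices for the $k=6$ formula. The second is arguing non-nestedness rigorously. Beyond that, the argument only needs to check that the formula for $b_k$ involves only odd-index divisor sums, which is immediate from the definition.
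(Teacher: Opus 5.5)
Your argument is correct, and it reaches the statement by a different route from the paper. The paper deduces the result by cases. For $p=2$ it uses Theorem~\ref{Thm: even progressions}, which pairs $d$ with $m/d$ for non-square $m$ and uses $8\mid d^2-1$. For $p=3$ it uses the isolated congruences of Theorem~\ref{Thm: mod 3 congruences}. For $p>3$ it uses Theorem~\ref{Thm: Multiplicative Congruences}, where multiplicativity reduces everything to $p\mid\sigma_j(2^{dp-1})$ with $d=\ord_p(2)$, established via the valuation computations of Lemma~\ref{Lem: Divisibility} and Proposition~\ref{Prop: Divisors}.

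You share the multiplicativity idea of that last step, but you replace the power of $2$ by a single prime $\ell\equiv-1\pmod{24p}$ dividing the argument exactly. You also exploit that only odd-index divisor sums occur in $b_k$, so $1+\ell^j\equiv 0$ is immediate and no valuation bookkeeping is needed. Building the factor $24$ into the modulus absorbs the denominators, so $p=2,3$ need no separate treatment.

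The paper's progressions $2^{dp}n+2^{dp-1}$ have the advantage of being closed-form in $p$ and $\ord_p(2)$, which is what yields Corollary~\ref{Cor: Mersenne primes}, and they need no information about primes in arithmetic progressions. Theorem~\ref{Thm: even progressions} also gives the stronger modulus $16$ when $p=2$. Your version is shorter and uniform in $p$. It also genuinely produces infinitely many non-nested progressions for each fixed $p$, which the cited theorems as stated do not: Theorem~\ref{Thm: Multiplicative Congruences} gives one progression per $p$, and Theorem~\ref{Thm: mod 3 congruences} gives two.

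The price is Dirichlet's theorem. You can avoid needing infinitely many $\ell$: a single such $\ell$ suffices, since for odd $e$ and odd $j$ one has $\sigma_j(\ell^e)\equiv\sum_{i=0}^{e}(-1)^i=0\pmod{24p}$. The pairwise disjoint progressions $\ell^{e+1}n+\ell^e$ with $e$ odd then already form an infinite non-nested family.
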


\begin{remark}
    Since the forms $H_k$ and their derivatives are a $\Z$-basis of the $\Z$-module $\Omega$, it follows that any universal congruence for the forms $H_k$ is inherited by every $f \in \Omega$. As we discuss in Section \ref{Sec: Conclusion}, many $f \in \Omega$ have congruences which are not universal.
\end{remark}

We make note of a particularly special case of Theorem \ref{Thm: mult congruences corollary} in which the values of $A,B$ are more closely tied to $p$ than in the typical cases. To state this corollary, we recall that a {\it Mersenne prime} is a prime of the form $2^n-1$.

\begin{corollary} \label{Cor: Mersenne primes}
    Let $p \not = 7$ be a Mersenne prime. Then we have
    \begin{align*}
        b_k\lp (p+1)^pn + (p+1)^{p-1} \rp \equiv 0 \pmod{p}
    \end{align*}
    for all even $k \geq 6$ and for all $n \geq 0$.
\end{corollary}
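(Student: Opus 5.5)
The plan is to derive the corollary from Theorem \ref{Thm: Multiplicative Congruences} with the auxiliary prime $\ell = 2$, treating $p=3$ separately through Theorem \ref{Thm: mod 3 congruences}. The motivating observation is that the progression is tied to a single power of $2$. Write $p = 2^m - 1$ with $m$ prime. Then $(p+1)^{p-1} = 2^{m(p-1)}$ and $(p+1)^p = 2^{mp}$, so every argument has the form
\begin{align*}
N = (p+1)^p n + (p+1)^{p-1} = 2^{a} u, \qquad a := m(p-1), \qquad u := 2^m n + 1 \text{ odd}.
\end{align*}
Thus the $2$-adic valuation of every $N$ in the progression is exactly $a$, and the odd part $u$ ranges over $u \equiv 1 \pmod{2^m}$.

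For $p \geq 31$ the denominators $6$ and $24$ are units mod $p$, so it is enough to show that the numerator of $b_k(N)$ vanishes mod $p$. By multiplicativity, $\sigma_j(N) = \sigma_j(2^a)\sigma_j(u)$. Also $2^m \equiv 1 \pmod p$, so $2$ has order exactly $m$ mod $p$, which gives $N^2 \equiv u^2 \pmod p$. The numerator therefore becomes a combination of $u^2\sigma_{k-7}(u)$, $(u^2+1)\sigma_{k-5}(u)$ and $\sigma_{k-3}(u)$, with coefficients $\sigma_j(2^a) \bmod p$ for the odd indices $j \in \{k-7, k-5, k-3\}$. These coefficients are computed from
\begin{align*}
\sigma_j(2^a) = \frac{2^{j(a+1)} - 1}{2^j - 1}.
\end{align*}
The key step is to check that the exponent $a$ satisfies the hypothesis of Theorem \ref{Thm: Multiplicative Congruences} at $\ell = 2$, namely that $\sigma_j(2^a)$ and the twisting factor $2^{2a}$ vanish or cancel in the required pattern mod $p$, uniformly in $k$. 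One needs $a+1 \equiv 0 \pmod m$ for $\sigma_j(2^a) \equiv 0$ when $m \nmid j$, or $a+1 \equiv 0 \pmod p$ when $m \mid j$. The exclusion of $p = 7$ (where $m = 3$) suggests that a small-case coincidence between $m$ and $p-1$ breaks the argument there, and I would locate that coincidence explicitly.

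For $p = 3$ the progression is $64n + 16$. Here I would instead verify that it lies in, or is a subprogression of, one of the families in Theorem \ref{Thm: mod 3 congruences}, since the factor $6$ or $24$ in the denominator rules out the direct argument above.

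The main obstacle is the exponent bookkeeping in the key step. A naive computation gives $a+1 = m(p-1)+1 \equiv 1 \pmod m$, which yields $\sigma_j(2^a) \equiv 1$ rather than $0$. So I must recheck exactly which residue condition Theorem \ref{Thm: Multiplicative Congruences} imposes on the exponent. It may be phrased in terms of $a \equiv -1$ modulo the order of $2$ in a different modulus, or it may involve the combination $-N^2\sigma_{k-7} + (N^2+1)\sigma_{k-5} - \sigma_{k-3}$ as a whole rather than each $\sigma_j$ separately. Matching the given $A, B$ to that hypothesis is where I expect the real work, and possibly a correction of the exponents, to lie.
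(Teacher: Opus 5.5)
\paragraph{The key step cannot be closed.} The ``naive computation'' you flag is correct, and it disproves the statement as printed. With $N=2^{a}u$, $a=m(p-1)$, you have $N\equiv u \pmod p$. Moreover $\sigma_j(2^{a})\equiv 1 \pmod p$ when $m\nmid j$, and $\sigma_j(2^{a})\equiv a+1\equiv 1-m \pmod p$ when $m\mid j$. So nothing forces the numerator of $b_k(N)$ to vanish.

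Concretely, take a Mersenne prime $p=2^m-1>3$, with $n=0$ and $k=m+5$. Then $(k-7,k-5,k-3)=(m-2,m,m+2)$, and
\begin{align*}
24\, b_k\bigl(2^{a}\bigr) \equiv -1 + 2(1-m) - 1 = -2m \not\equiv 0 \pmod{p},
\end{align*}
since $0<2m<p$. For example, $24\,b_{10}(2^{150})\equiv -10 \pmod{31}$. Already for $k=6$ it fails: $b_6(2^{150}\cdot 33)\equiv b_6(33)=2240\equiv 8 \pmod{31}$, which is the case $n=1$.

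Your plan for $p=3$ fails too. Since $64n+16\equiv n+1 \pmod 3$ runs through every residue class, the progression is not inside $3n+1$ or $3n+2$. Indeed $b_6(144)=792577\equiv 1\pmod 3$, which is the case $n=2$.

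\paragraph{What the paper's proof actually gives.} The paper's proof is the one-line specialization of Theorem \ref{Thm: Multiplicative Congruences} to $d=\ord_p(2)=m$. That specialization yields the progression
\begin{align*}
2^{mp}n+2^{mp-1}=(p+1)^pn+\tfrac{1}{2}(p+1)^p, \qquad N=2^{mp-1}u \text{ with } u \text{ odd},
\end{align*}
not $(p+1)^pn+(p+1)^{p-1}$. Here the exponent plus one is $mp$, which is divisible by both $m$ and $p$. Hence $\sigma_j(2^{mp-1})\equiv 0\pmod p$ for every $j$:
\begin{itemize}
\item if $m\nmid j$, it equals $(2^{jmp}-1)/(2^j-1)$ with a unit denominator and numerator $\equiv 0$;
\item if $m\mid j$, it is $\equiv mp\equiv 0$.
\end{itemize}
This is exactly the condition ``$a+1\equiv 0$ modulo $m$ and modulo $p$'' that you wrote down.

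The printed constant $(p+1)^{p-1}=2^{m(p-1)}$ appears to come from reading $2^{dp-1}$ as $(2^d)^{p-1}$. So the ``correction of the exponents'' you anticipated is genuinely required, not a bookkeeping issue on your side. With $B=\tfrac12(p+1)^p$, your multiplicativity argument goes through verbatim for every Mersenne prime $p>3$, including $p=7$, so that exclusion is unnecessary. For $p=3$, write $\sigma_j(2^5)=(1+2^j)(1+2^{2j}+2^{4j})$. For odd $j$ each factor is divisible by $3$, so $9\mid\sigma_j(2^5)$, which absorbs the denominators $6$ and $24$.
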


The remainder of our paper is summarized as follows. In Section \ref{Sec: Isolated}, we prove a number of isolated universal congruences for small odd primes. In Section \ref{Sec: Quad Nonresidues}, we prove two infinite families of universal Ramanujan-type congruences modulo 2 by utilizing the elementary theory of quadratic residues modulo odd primes. In Section \ref{Sec: Serre}, we prove Theorem \ref{Thm: Infinitely many uniform congruences} as a corollary of classical results of Serre. In Section \ref{Sec: Multiplicative}, we prove Theorem \ref{Thm: mult congruences corollary}. Finally, in Section \ref{Sec: Conclusion} we discuss possible future directions for work.

\section*{Acknowledgments}

This project was conducted during the course of the Science and Engineering Apprenticeship Program (SEAP) at the US Naval Academy during the summer of 2026. The first author thanks SEAP for funding which enabled this visit to occur. The views expressed in this article are those of the author and do not reflect the official policy or position of the U.S. Naval Academy, Department of the Navy, the Department of War, or the U.S. Government.

\section{Isolated congruences} \label{Sec: Isolated}

In this section, we discuss a number of universal Ramanujan-type congruences for a few small odd primes.

\subsection{Congruences modulo 3}

\begin{theorem} \label{Thm: mod 3 congruences}
    For every $n \geq 0$ and $k \geq 6$, we have
    \begin{align*}
        b_k(3n+1) \equiv b_k(3n+2) \equiv 0 \pmod{3}.
    \end{align*}
\end{theorem}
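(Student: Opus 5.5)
The plan is to show that, for every $n$ with $3 \nmid n$, the integer numerator in the definition of $b_k(n)$ is divisible by $9$. The denominators $6$ and $24$ each contain exactly one factor of $3$, and $b_k(n)$ is an integer (as recalled from \cite[Lemma 12]{CraigIttersumOno}). So divisibility of the numerator by $9$ forces $3 \mid b_k(n)$. The arguments $3n+1$ and $3n+2$ are exactly the positive integers coprime to $3$, so this suffices. The key input is that for $3 \nmid n$, every divisor $d \mid n$ is also coprime to $3$, hence $d^2 \equiv 1 \pmod 3$, and likewise $n^2 \equiv 1 \pmod 3$.

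\textbf{Case $k \geq 8$.} I will write the numerator as a single sum over divisors, factoring out $d^{k-7}$ (the exponent $k-7$ is odd and nonnegative):
\begin{align*}
    -n^2\sigma_{k-7}(n) + (n^2+1)\sigma_{k-5}(n) - \sigma_{k-3}(n) = \sum_{d \mid n} d^{k-7}\lp -n^2 + (n^2+1)d^2 - d^4 \rp = -\sum_{d\mid n} d^{k-7}(d^2-1)(d^2-n^2).
\end{align*}
Both factors $d^2-1$ and $d^2-n^2$ are divisible by $3$. Hence every summand is divisible by $9$, and so is the numerator.

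\textbf{Case $k = 6$.} The numerator is $N := \sum_{d \mid n} f(d)$ with $f(d) := d\lp n^2-n+1-d^2 \rp$. A single summand is not obviously divisible by $9$, so I will symmetrize using the involution $d \mapsto e := n/d$. This gives $2N = \sum_{d \mid n} \lp f(d) + f(e) \rp$. Using $de = n$ and $d^3+e^3 = (d+e)(d^2 - de + e^2)$, one computes
\begin{align*}
    f(d)+f(e) = (d+e)\lp n^2 - n + 1 - (d^2 - n + e^2) \rp = (d+e)\lp d^2e^2 - d^2 - e^2 + 1 \rp = (d+e)(d^2-1)(e^2-1).
\end{align*}
Each factor $d^2-1$ and $e^2-1$ is divisible by $3$, so every term is divisible by $9$. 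Thus $9 \mid 2N$, and hence $9 \mid N$.

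The main obstacle is the $k=6$ case. There the divisor-by-divisor argument fails, and one must find the pairing $d \leftrightarrow n/d$ that exposes the factorization $(d^2-1)(e^2-1)$. Once that identity is in hand, both cases reduce to the same observation that squares of units mod $3$ are $1$.
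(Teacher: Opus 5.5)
Your proof is correct. Its skeleton matches the paper's: show the numerator is divisible by $9$ when $3\nmid n$, pair complementary divisors for $k=6$, and argue summand by summand for $k\geq 8$.

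The mechanism differs, though. The paper computes with residues in $(\Z/9\Z)^\times$. For $k=6$ it separates $n\equiv 2$ from $n\equiv 1\pmod 3$, using $d^6\equiv 1\pmod 9$ in the first case, and $n^2-n+1\equiv n\pmod 9$ together with the pairings $\{2,5\}$, $\{4,7\}$ in the second. For $k\geq 8$ it separates $d\equiv\pm1\pmod 9$ from the remaining classes.

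You instead use exact identities over $\Z$: $f(d)+f(n/d)=(d+n/d)(d^2-1)\bigl((n/d)^2-1\bigr)$, and $-d^{k-7}(d^2-1)(d^2-n^2)$ for the $k\geq 8$ summand. These are the same factorizations the paper introduces only later, in the proof of Theorem~\ref{Thm: even progressions}. What this buys:
\begin{itemize}
\item A uniform argument with no case split on $n \bmod 9$, whose only input is $d^2\equiv n^2\equiv 1\pmod 3$.
\item Your symmetrization $2N=\sum_{d\mid n}\bigl(f(d)+f(n/d)\bigr)$ covers perfect squares $n\equiv 1\pmod 3$ automatically. Pairing distinct divisors needs a separate word about $\sqrt{n}$ (e.g.\ $n=4$, $d=2$).
\item It sidesteps a looseness in the paper's bookkeeping. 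In the case $n\equiv 1\pmod 3$ and in the $k\geq 8$ case, the paper effectively identifies $n/d$ with $d^{-1}$ modulo $9$: it writes $n\equiv dd^{-1}$ and lists only the pairs $\{d,d'\}\equiv\{2,5\},\{4,7\}$. That identification is literally valid only for $n\equiv 1\pmod 9$. The paper's conclusions still hold, but your identities make that step unnecessary.
\end{itemize}

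The paper's residue-by-residue style does parallel its mod $5$ and mod $7$ proofs. Your factorizations would shorten those too: a pair $\{d,n/d\}$ contributes a multiple of $p$ whenever $d\equiv\pm1$, $n/d\equiv\pm1$, or $d\equiv -n/d\pmod p$ (using that $k-5$ is odd). One checks this covers exactly the progressions stated there.
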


\begin{proof}
    First, we consider the case $k=6$. By the definition of $b_6(n)$, the desired result follows from
    \begin{align*}
        (n^2-n+1) \sigma_1(n) - \sigma_3(n) \equiv 0 \pmod{9}
    \end{align*}
    for $n \equiv 1, 2 \pmod{3}$. If we replace $n \mapsto 9n+a$ for some choice $a \equiv 1,2,4,5,7,8 \pmod{9}$, the desired result follows whenever
    \begin{align*}
        (a^2-a+1)\sigma_1(9n+a) - \sigma_3(9n+a) \equiv 0 \pmod{9}.
    \end{align*}
    If $a \equiv 2 \pmod{3}$, then we may calculate $a^2 - a + 1 \equiv 3 \pmod{9}$, and therefore in this case we must check that
    \begin{align*}
        3\sigma_1(3n+2) - \sigma_3(3n+2) \equiv - \sum_{d|3n+2} \lp d^3 - 3d \rp \equiv 0 \pmod{9}.
    \end{align*}
    Now, $3n+2$ is never a square, so in the above summation over $d|n$ we may pair the terms associated to $d \pmod{9}$ and $d^{-1} \pmod{9}$. If we write $d' = \frac{3n+2}{d}$ and $d^{-1} d \equiv 1 \pmod{9}$, then $(d')^3 \equiv -d^{-3} \pmod{9}$ and $3d' \equiv 6d^{-1} \pmod{9}$. Since Euler's theorem dictates $d^3 \equiv d^{-3} \pmod{9}$ and $d^2 \equiv 1 \pmod{3}$ for $d|3n+2$, we have
    \begin{align*}
        \sum_{d|n} \lp d^3 - 3d \rp &\equiv \sum_{d,d'|3n+2} \lp d^3 - 3d + (d')^3 - 3d' \rp \pmod{9} \\
        &\equiv \sum_{d,d' | 3n+2} \lp d^3 - 3d - d^{-3} - 6d^{-1} \rp \pmod{9} \\
        &\equiv - \sum_{d,d' | 3n+2} 3\lp d + 2d^{-1} \rp \equiv 0 \pmod{9},
    \end{align*}
    since $d + 2d^{-1} \equiv d + 2d \equiv 0 \pmod{3}$. If on the other hand $a \equiv 1 \pmod{3}$, then we must show
    \begin{align*}
        (3n+1) \sigma_1(3n+1) - \sigma_3(3n+1) \equiv 0 \pmod{9}.
    \end{align*}
    Writing $3n+1 \equiv dd^{-1} \pmod{9}$, we obtain
    \begin{align*}
        (3n+1) \sigma_1(3n+1) - \sigma_3(3n+1)
        \equiv \sum_{d|n} (dd^{-1}) d - d^3
        \equiv \sum_{d|n} d^2\lp d^{-1} - d \rp \pmod{9}.
    \end{align*}
    When $d \equiv 1, 8 \pmod{9}$, we have $d^{-1} - d \equiv 0 \pmod{9}$. Otherwise, $\{ d, d^{-1} \} \equiv \{ 2, 5 \} \textrm{ or } \{ 4, 7 \} \pmod{9}$; in particular, we note that $d \not\equiv d^{-1} \pmod{9}$ and so we may pair terms in the sum over $d|n$ and we obtain in either case that
    \begin{align*}
        d^2\lp d^{-1} - d \rp + d^{-2}\lp d - d^{-1} \rp \equiv \lp d^2 - d^{-2} \rp \lp d - d^{-1} \rp \equiv 0 \pmod{9}.
    \end{align*}
    This completes the proof for $k=6$.

    For $k \geq 8$, to prove the desired result we must show that
    \begin{align*}
        -n^2 \sigma_{k-7}(n) + (n^2+1) \sigma_{k-5}(n) - \sigma_{k-3}(n) \equiv 0 \pmod{9}
    \end{align*}
    for $n \equiv 1,2 \pmod{3}$. If we write $3n+a = d d'$ for each $d|3n+a$ and $a \equiv 1, 2 \pmod{3}$, the desired result follows if
    \begin{align*}
        \sum_{d|n} \left[ -d^2 (d')^2 d^{k-7} + (d^2 (d')^2 + 1)d^{k-5} - d^{k-3} \right] \equiv 0 \pmod{9}.
    \end{align*}
    Among those $d|3n+a$ for which $d^2 \equiv 1 \pmod{9}$, i.e. $d \equiv 1, 8 \pmod{9}$, we also have $(d')^2 \equiv 1 \pmod{9}$ and the summand above vanishes since
    \begin{align*}
        -d^2 (d')^2 d^{k-7} + (d^2 (d')^2 + 1)d^{k-5} - d^{k-3} \equiv -d^{k-7}\lp 1 - 2d^2 + d^4 \rp \equiv 0 \pmod{9}.
    \end{align*}
    Now, if $d^2 \not\equiv 1 \pmod{9}$, i.e. $\{ d, d' \} \equiv \{ 2, 5 \} \textrm{ or } \{ 4, 7 \} \pmod{9}$, we see that $(d')^2 \equiv d^4 \pmod{9}$, and because $d^6 \equiv 1 \pmod{9}$ for any $3\centernot|d$ we obtain
    \begin{align*}
        -d^2 (d')^2 d^{k-7} + (d^2 (d')^2 + 1)d^{k-5} - d^{k-3} \equiv -d^{k-5} \lp d^4 - 2d^2 + 1 \rp \equiv -d^{k-5}(d^2-1)^2 \equiv 0 \pmod{9}
    \end{align*}
    since $d^2 \equiv 1 \pmod{3}$. This completes the proof.
\end{proof}

\subsection{Congruences Modulo 5}
\begin{theorem}
For every $n \ge 0$ and $k \geq 6$ even, the following congruences hold:
\begin{align*}
    b_k(5n+1) \equiv b_k(5n+2) \equiv b_k(5n+3) \equiv 0 \pmod{5}.
\end{align*}
\end{theorem}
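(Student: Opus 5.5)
The plan is to work one divisor at a time, exactly as in the proof of Theorem~\ref{Thm: mod 3 congruences}. Since $\gcd(5,6)=\gcd(5,24)=1$, it suffices to show that the numerator in the definition of $b_k(N)$ is divisible by $5$ whenever $N\equiv 1,2,3 \pmod 5$. Then $5\nmid N$, so every divisor $d$ of $N$ is a unit modulo $5$. We write $N=dd'$, expand each divisor sum as a sum over $d\mid N$, and either show that each summand vanishes modulo $5$, or pair $d$ with its complementary divisor $d'$ so that the two contributions cancel. Throughout we use $d^2\in\{1,4\}\pmod 5$ and $d^4\equiv 1\pmod 5$.

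\textbf{Case $k\ge 8$.} Using $N^2 = d^2(d')^2$, the contribution of $d$ should factor as
\begin{align*}
-N^2d^{k-7}+(N^2+1)d^{k-5}-d^{k-3} = -d^{k-7}\lp d^2-1\rp\lp d^2-N^2\rp .
\end{align*}
If $N\equiv 2,3\pmod 5$, then $N^2\equiv 4$. Since $d^2\in\{1,4\}$, one of the two factors always vanishes, so every summand is $0 \pmod 5$.

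If $N\equiv 1\pmod 5$, then $N^2\equiv 1$. The summand vanishes when $d\equiv\pm1$. When $d\equiv 2,3$, the complementary divisor satisfies $d'\equiv d^{-1}\equiv -d$. Hence $(d')^2\equiv d^2$, so the quadratic factors agree for $d$ and $d'$. Since $k-7$ is odd, $(d')^{k-7}\equiv -d^{k-7}$, and the two terms cancel. The fixed point $d=d'$ can only occur when $d^2\equiv 1$, where the summand is already $0$, so the pairing is well defined.

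\textbf{Case $k=6$.} Here the numerator is $\sum_{d\mid N} d\lp (N^2-N+1)-d^2\rp$, and I would check the three residue classes separately.
\begin{itemize}
\item $N\equiv 2$: the constant $N^2-N+1\equiv 3$. The summand is $2d$ if $d^2\equiv1$ and $-d$ if $d^2\equiv4$. Pairing $d\equiv1$ with $d'\equiv2$ gives $2-2=0$, and pairing $d\equiv 4$ with $d'\equiv 3$ gives $8-3\equiv0$.
\item $N\equiv 3$: the constant is $\equiv 2$. The summand is $d$ if $d^2\equiv1$ and $-2d$ if $d^2\equiv 4$. The pairs $\{1,3\}$ and $\{4,2\}$ give $1-6$ and $4-4$, both $\equiv 0$.
\item $N\equiv 1$: the constant is $1$. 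The summand $d(1-d^2)$ vanishes for $d\equiv\pm1$, and the pair $\{2,3\}$ contributes $-6-24\equiv 0$.
\end{itemize}
In the first two classes $N$ is a quadratic nonresidue modulo $5$, so $d\ne d'$ always and the pairing is perfect. When $N\equiv1$, the only self-paired divisors have $d^2\equiv 1$, which contribute $0$.

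The main obstacle I expect is finding the factorization in the case $k\ge 8$. It reduces a general-$k$ statement to the single sign identity $(d')^{k-7}\equiv -d^{k-7}$, which uses the parity of $k$. Once that is in hand, everything else is a finite residue check modulo $5$.
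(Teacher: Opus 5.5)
Your proof is correct and follows essentially the same route as the paper. You reduce to the numerator modulo $5$ using $\gcd(5,6)=\gcd(5,24)=1$, note that every divisor of $N$ is a unit modulo $5$, and pair each $d$ with $N/d$; your exact factorization $-d^{k-7}(d^2-1)(d^2-N^2)$ for $k\ge 8$ is a clean streamlining, since it makes each term vanish individually when $N\equiv 2,3 \pmod 5$ and uses only the parity of $k-7$ when $N\equiv 1 \pmod 5$, so you avoid the paper's separate treatment of $k\equiv 0$ and $k\equiv 2 \pmod 4$.
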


\begin{proof}
We begin with the case $k=6$. 
By expanding the respective expressions, we must check that
\begin{align}
    b_6(5n+1) &\equiv \sigma_1(5n+1) - \sigma_3(5n+1) \pmod{5}, \label{pmod eq 5-1} \\
    b_6(5n+2) &\equiv 3\sigma_1(5n+2) - \sigma_3(5n+2) \pmod{5}, \label{pmod eq 5-2} \\
    b_6(5n+3) &\equiv 2\sigma_1(5n+3) - \sigma_3(5n+3) \pmod{5}. \label{pmod eq 5-3}
\end{align}
Since $5 \centernot| d$ for any $d|5n+a$, $a \in \{1,2,3\}$, we may consider for each $d$ its complementary divisor $\frac{5n+a}{d}$.

We begin with $a=1$. If $d \equiv 1, 4 \pmod 5$, then $d^3 \equiv d \pmod 5$. For $d \equiv 2, 3 \pmod 5$, the cofactor simplifies to $\frac{5n+1}{d} \equiv -d \pmod 5$, which implies in particular that $d \neq \frac{5n+1}{d}$. The contribution of this pair to the sum underlying \eqref{pmod eq 5-1} becomes 
\begin{align*}
    d^3 + d + \left(\frac{5n+1}{d}\right)^3 + \left(\frac{5n+1}{d}\right) \equiv d^3 + d - d^3 - d \equiv 0 \pmod 5.
\end{align*}
Summing over all $d \mid (5n+1)$ yields $\sigma_3(5n+1) \equiv \sigma_1(5n+1) \pmod 5$, which fulfills \eqref{pmod eq 5-1}.

Now assume $a \in \{2,3\}$. Since $2,3 \pmod{5}$ are quadratic nonresidues, we note $d \not = \frac{5n+a}{d}$ in this case. By Fermat's Little Theorem, $d^4 \equiv 1 \pmod 5$. Therefore, the cubed cofactors produce
\begin{align*}
d^3 + \left(\frac{5n+a}{d}\right)^3 &\equiv -a\left(d + \frac{5n+a}{d}\right) \pmod 5.
\end{align*}
Summing over all divisors establishes $\sigma_3(5n+a) \equiv -a\sigma_1(5n+a) \pmod 5$, completing the proof of both \eqref{pmod eq 5-2} and \eqref{pmod eq 5-3}.

We now consider $k \geq 8$ even. We must show that
\begin{align}
    b_k(5n+1) &\equiv \sigma_{k-7}(5n+1) + 3\sigma_{k-5}(5n+1) + \sigma_{k-3}(5n+1) \pmod 5, \label{pmod_eq_8-1} \\
    b_k(5n+2) &\equiv 4\sigma_{k-7}(5n+2) + \sigma_{k-3}(5n+2) \pmod 5, \label{pmod eq 8-2} \\
    b_k(5n+3) &\equiv 4\sigma_{k-7}(5n+3) + \sigma_{k-3}(5n+3) \pmod 5. \label{pmod eq 8-3}
\end{align}
 Since $5 \nmid 5n+a$ for $a \in \{1,2,3\}$, every divisor $d \mid 5n+a$ has a unique complimentary divisor $\frac{5n+a}{d}$.
 
Assume that $a=1$. If $k \equiv 0 \pmod 4$, by Fermat's Little Theorem we have $d^{k-7} \equiv d \pmod 5$, $d^{k-5} \equiv d^3 \pmod 5$, and $d^{k-3} \equiv d \pmod 5$. The contribution of the complementary divisor pairs to \eqref{pmod_eq_8-1} then become
\begin{align} \label{pmod 5 helper}
    2(d+d^{-1}) + 3(d^3+d^{-3}) \equiv 0 \pmod 5,
\end{align}
satisfying \eqref{pmod_eq_8-1}. If on the other hand $k \equiv 2 \pmod{4}$, we have $d^{k-7} \equiv d^3 \pmod{5}$, $d^{k-5} \equiv d \pmod{5}$, and $d^{k-3} \equiv d^3 \pmod{5}$, which result in pairs of divisors contributing to \eqref{pmod_eq_8-1} as a multiple of the left-hand side of \eqref{pmod 5 helper}, which completes the proof of \eqref{pmod_eq_8-1}.

Assume that $a\in\{2,3\}$. If $k \equiv 0 \pmod 4$, the contributions of complementary divisor pairs to the linear combinations in \eqref{pmod eq 8-2} and \eqref{pmod eq 8-3} simplify to
\begin{align*}
    4(d + 2d^{-1}) + (d + 2d^{-1}) = 5d + 10d^{-1} \equiv 0 \pmod 5,\\
     4(d + 3d^{-1}) + (d + 3d^{-1}) = 5d + 15d^{-1} \equiv 0 \pmod 5.
\end{align*}
If $k \equiv 2 \pmod 4$, the divisor powers satisfy $d^{k-7} \equiv d^3$ and $d^{k-3} \equiv d^3 \pmod 5$, reducing the pair contributions to 
\begin{align*}
    4(d^3 + 3d^{-3}) + (d^3 + 3d^{-3}) = 5d^3 + 15d^{-3} \equiv 0 \pmod 5, \\ 4(d^3 + 2d^{-3}) + (d^3 + 2d^{-3}) = 5d^3 + 10d^{-3} \equiv 0 \pmod 5.
\end{align*}
Thus, the complementary divisor pairs contribute $0 \pmod 5$ in each subsequent case. Summing over all divisors completes the proof.
\end{proof}

\subsection{Congruences Modulo 7}

\begin{theorem}
For all $n \ge 0$ and $k \geq 6$ even, the following congruences hold:
\begin{align*}
    b_k(7n+3) \equiv b_k(7n+5) \equiv 0 \pmod 7.
\end{align*}
\end{theorem}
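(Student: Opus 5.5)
The plan is to follow the complementary-divisor pairing used for the moduli $3$ and $5$. Here the key input is that $3$ and $5$ are quadratic nonresidues and primitive roots modulo $7$. Fix $a \in \{3,5\}$ and $N = 7n+a$. Then $7 \nmid d$ for every $d \mid N$. Since $N$ is a nonresidue, $N$ is not a square, so $d \neq d' := N/d$, and the divisors split into pairs $\{d, d'\}$ with $d' \equiv a d^{-1} \pmod 7$. Because $6$ and $24$ are units modulo $7$, it suffices to show that the numerators in the definition of $b_k(N)$ vanish modulo $7$. I would also use $d^6 \equiv 1 \pmod 7$ and Euler's criterion $a^3 \equiv -1 \pmod 7$ throughout.

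For $k=6$, observe that $a^2-a+1 \equiv 0 \pmod 7$ for both $a=3$ ($9-3+1=7$) and $a=5$ ($25-5+1=21$). So the numerator reduces to $-\sigma_3(N)$, and we only need $\sigma_3(N) \equiv 0 \pmod 7$. For a pair we have
\begin{align*}
d^3 + (d')^3 \equiv d^3 + a^3 d^{-3} \equiv d^3 - d^{-3} \equiv d^3 - d^3 \equiv 0 \pmod 7,
\end{align*}
using $d^{-3} \equiv d^3$, which follows from $d^6 \equiv 1$. Summing over pairs gives the result.

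For $k \geq 8$, I would replace $n$ by its residue $a$ in the coefficients. The numerator then becomes $\sum_{d \mid N} g(d)$, where
\begin{align*}
g(d) := -a^2 d^{k-7} + (a^2+1) d^{k-5} - d^{k-3} = -d^{k-7}\lp d^2-1 \rp \lp d^2-a^2 \rp.
\end{align*}
By this factorization, $g(d) \equiv 0$ unless $d^2$ is neither $1$ nor $a^2$ modulo $7$. The squares modulo $7$ are $\{1,2,4\}$, and $a^2 \in \{2,4\}$, so the only remaining class is $d^2 \equiv a^4$.

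Next I would check that this class is closed under $d \mapsto d'$. If $d^2 \equiv a^4$, then $(d')^2 \equiv a^2 d^{-2} \equiv a^{-2} \equiv a^4$, since $a^6 \equiv 1$. Hence, in a pair, either both terms vanish or both lie in the exceptional class. In the exceptional case $g(d)$ and $g(d')$ share the factor $-(a^4-1)(a^4-a^2)$, so it suffices to show $d^{e} + (d')^{e} \equiv 0 \pmod 7$ with $e = k-7$ odd.

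To see this, write $d \equiv \varepsilon a^2$ with $\varepsilon = \pm 1$. Then $d' \equiv a/d \equiv \varepsilon a^{-1} \equiv \varepsilon a^5$, and
\begin{align*}
d^e + (d')^e \equiv \varepsilon\lp a^{2e} + a^{5e} \rp \equiv \varepsilon a^{2e}\lp 1 + (a^3)^e \rp \equiv \varepsilon a^{2e}\lp 1 + (-1)^e \rp \equiv 0 \pmod 7.
\end{align*}
Here $\varepsilon^e = \varepsilon$ because $e$ is odd, and the last step uses $a^3 \equiv -1$ together with $e$ odd.

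I expect the main difficulty to be organizational rather than conceptual. Once the factorization of $g$ is found, everything reduces to bookkeeping of residue classes modulo $7$; the parity of $k-7$ is the one place where the hypothesis that $k$ is even is used.
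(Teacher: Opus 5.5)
Your proof is correct and follows essentially the same route as the paper. For $k=6$ you use $a^2-a+1\equiv 0 \pmod 7$ and pair $d$ with $N/d\equiv a d^{-1}$ via $a^3\equiv -1$; for $k\ge 8$ you factor the per-divisor contribution as $-d^{k-7}(d^2-1)(d^2-a^2)$, which is the paper's $-d^{k-7}(d^2-1)(d^2-(a-1))$ since $a^2\equiv a-1 \pmod 7$, and your explicit description of the surviving pairs as $\{\varepsilon a^2,\varepsilon a^5\}$, cancelling because $k-7$ is odd, fills in the step the paper only sketches.
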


\begin{proof}
We begin with $k=6$. For $a \equiv 3, 5 \pmod{7}$, we have $(7n+a)^2 - (7n+a) + 1 \equiv 0 \pmod 7$. Since $6^{-1} \equiv 6 \pmod 7$, we obtain the simplified terms
\begin{align}
    b_6(7n+a) \equiv \sigma_3(7n+a) \pmod 7. \label{pmod eq 7-35}
\end{align}
Because $3$ and $5$ are quadratic non-residues modulo $7$, the equation $d^2 \equiv 3, 5 \pmod 7$ has no solutions, guaranteeing $d \neq \frac{7n+a}{d}$. Note that
\begin{align*}
    \left( \frac{7n+3}{d} \right)^3 \equiv \left(\frac{7n+5}{d}\right)^3 \equiv -d^{-3} \pmod 7.
\end{align*}
The contribution of each complimentary pair of divisors to $\sigma_3(7n+a)$ is then $d^3 - d^{-3} \equiv 0 \pmod 7$ by Fermat's Little Theorem. Summing over all divisors completes the proof. 

We now consider $k \geq 8$ even. In this case, noting that $a^2 \equiv a-1 \pmod{7}$ for $a \in \{ 3,5 \}$, we must show that
\begin{align*}
    b_k(7n+a) \equiv -(a-1)\sigma_{k-7}(7n+a) + a \sigma_{k-5}(7n+a) - \sigma_{k-3}(7n+a) \equiv 0 \pmod{7}.
\end{align*}
If $d|7n+a$, the contribution of this divisor $d$ to the sum above will be precisely
\begin{align*}
    -(a-1)d^{k-7} + ad^{k-5} - d^{k-3} \equiv -d^{k-7}\lp d^4 - ad^2 + (a-1) \rp \pmod{7}.
\end{align*}
In each case, this value vanishes modulo 7 unless $d \equiv \pm (a-1) \pmod{7}$, and the classes $\pm (a-1) \pmod{7}$ correspond to the pairs $\{ 3, 3^{-1} \}, \{ 5, 5^{-1} \} \pmod{7}$, and by summing over complementary divisors we again find vanishing, which completes the proof. 
\end{proof}

\section{Congruences from quadratic non-residues} \label{Sec: Quad Nonresidues}

In this section, we demonstrate the existence of infinitely many non-nested arithmetic progressions for which we have a universal Ramanujan-type congruence for $\Omega$ modulo powers of 2. In this section $\legendre{a}{p}$ denotes the usual Legendre symbol.

\begin{theorem}
    Let $p$ be a prime number such that $p \equiv 1 \pmod 8$. Let $a$ be a quadratic non-residue modulo $p$. Then any integer $n$ such that $m = pn+ a \geq 2$, we have
\begin{align*}
    b_6(pn+a) \equiv 0 \pmod 2.
\end{align*}
\end{theorem}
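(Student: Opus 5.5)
The plan is to reduce the claim to a parity statement about the odd part of $m$, and then to use the quadratic non-residue hypothesis only to rule out that this odd part is a perfect square. By definition $b_6(m) = \frac{(m^2-m+1)\sigma_1(m) - \sigma_3(m)}{6}$, and this is an integer. So $b_6(m) \equiv 0 \pmod 2$ is equivalent to
\begin{align*}
    N(m) := (m^2-m+1)\sigma_1(m) - \sigma_3(m) \equiv 0 \pmod 4.
\end{align*}

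\textbf{Step 1: $m$ is neither a square nor twice a square.} Since $m = pn+a$ with $p \nmid a$, we have $m \not\equiv 0 \pmod p$. If $m = x^2$, then $a \equiv x^2 \pmod p$ would be a residue, which is a contradiction. If $m = 2x^2$, then $p \nmid x$. Because $p \equiv 1 \pmod 8$, the second supplementary law gives $\legendre{2}{p} = 1$, and hence $\legendre{a}{p} = \legendre{2}{p}\legendre{x}{p}^2 = 1$, again a contradiction. This is the only place the hypothesis $p \equiv 1 \pmod 8$ is used. Writing $m = 2^e u$ with $u$ odd, Step 1 is equivalent to saying that $u$ is not a perfect square.

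\textbf{Step 2: factor the divisor sums.} By multiplicativity,
\begin{align*}
    \sigma_1(m) = (2^{e+1}-1)\,\sigma_1(u), \qquad \sigma_3(m) = \sigma_3(2^e)\,\sigma_3(u).
\end{align*}
We have $\sigma_3(2^e) = 1 + 8 + \cdots + 8^e \equiv 1 \pmod 8$. Every divisor $d$ of $u$ is odd, so $d^2 \equiv 1 \pmod 8$ and therefore $d^3 \equiv d \pmod 8$. Consequently $\sigma_3(m) \equiv \sigma_1(u) \pmod 8$, and
\begin{align*}
    N(m) \equiv \sigma_1(u)\left[(m^2-m+1)(2^{e+1}-1) - 1\right] \pmod 4.
\end{align*}

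\textbf{Step 3: finish by parity.} The bracket is even, because $m^2-m+1$ and $2^{e+1}-1$ are both odd, so their product minus $1$ is even. Since $u$ is odd and not a square, its divisors pair off as $\{d, u/d\}$ with $d \neq u/d$. Each pair contributes a sum of two odd numbers, so $\sigma_1(u)$ is even. The product of two even numbers is divisible by $4$, which gives $N(m) \equiv 0 \pmod 4$.

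I expect the main obstacle to be keeping track of the power of $2$ carefully. One might first try to show $\sigma_1(m) \equiv \sigma_3(m) \equiv 0 \pmod 2$ directly, but that only yields $N(m) \equiv 0 \pmod 2$. The mod $8$ reduction $d^3 \equiv d$ for odd $d$ and the factorization through the odd part $u$ are what supply the extra factor of $2$ needed to reach $4$.
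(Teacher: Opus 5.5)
Your proof is correct, and it rests on the same arithmetic fact as the paper's proof: because $\legendre{2}{p}=1$, the odd part $u$ of $m$ cannot be a perfect square. The two arguments are organized differently. The paper works modulo $4$, using $d^3\equiv d \pmod 4$ for $d\not\equiv 2\pmod 4$, and arrives at $N(m)\equiv m(m+1)\sigma_1(m)+2\sigma_0(u)\pmod 4$. It then treats the four classes of $m \bmod 4$ one at a time, and appeals to the parity of $\sigma_0(u)$ (hence to the non-residue hypothesis) in the classes $m\equiv 0,3\pmod 4$. You instead split off the $2$-part by multiplicativity and work modulo $8$, using $d^3\equiv d$ for odd $d$ and $\sigma_3(2^e)\equiv 1$. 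This gives the single congruence $N(m)\equiv\sigma_1(u)\,[(m^2-m+1)(2^{e+1}-1)-1]\pmod 8$, valid for every $m$, so the evenness of $\sigma_1(u)$ finishes the argument. You need no case analysis and no bookkeeping of divisors $d\equiv 2\pmod 4$. Your formula also sharpens the paper's picture of where the hypothesis is needed. The bracket is already $\equiv 0\pmod 4$ when $m\equiv 1,2\pmod 4$. When $m\equiv 3\pmod 4$, the odd part $u=m$ is automatically not a square. So the conditions on $p$ and $a$ are genuinely used only when $4\mid m$.
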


\begin{proof}
 We must show that
\begin{align}
    b_6(m) \equiv (m^2-m+1)\sigma_1(m)-\sigma_3(m) \equiv 0 \pmod{4} \label{pn+a}
\end{align}
whenever $m$ is a quadratic non-residue for some prime $p \equiv 1 \pmod{8}$.
For any divisor $d|m$, if $d \equiv 2 \pmod 4$, then $d^3 \equiv 0 \pmod 4$. For all $d \not\equiv 2 \pmod 4$, we have $d^3 \equiv d \pmod 4$. Let $m = 2^j M$, where $M$ is odd. We also let $\sigma_0^{\text{odd}}(m)$ denote the number of odd divisors of $m$, so that $\sigma_0^{\text{odd}}(m) = \sigma_0(M)$. Taking the sum over every divisor $ d\mid m$, we have
 \begin{align*}
     \sigma_3(m) \equiv \sigma_1(m) - 2\sigma_0(M) \pmod 4.
 \end{align*}
 Further substitution into \eqref{pn+a} and through the reduction of polynomial coefficients modulo $4$, 
 \begin{align}
    b_6(m) 
    &\equiv (m^2-m+1) \sigma_1(m)- (\sigma_1(m) -2\sigma_0(M)) \pmod 4 \nonumber \\
    &\equiv m(m+1)\sigma_1(m)+2\sigma_0(M) \pmod 4. \label{reduced pn+a}
\end{align}

For $m \equiv 1 \pmod 4$, we have $j = 0$ and $m=M=4k+1$. The leading coefficient satisfies $m(m+1)=(4k+1)(4k+2) \equiv 2 \pmod 4$. Since $M$ is odd, every divisor $d \mid M$ can be paired with another odd divisor $M/d$. This implies $\sigma_1(M) \equiv \sigma_0(M) \pmod 2$, yielding $2\sigma_1(M) \equiv 2\sigma_0(M) \pmod 4$. Substitution back into \eqref{reduced pn+a} produces
 \begin{align*}
    b_6(m) \equiv 2\sigma_1(M)+2\sigma_0(M) \equiv 4\sigma_0(M) \equiv 0\pmod 4.
 \end{align*}

 For $m \equiv 2\pmod 4$, $j=1$ and $m=2M$, where $M=2k+1$. The algebraic term yields $m(m+1)= 2M(2M+1) \equiv 2(1)(3) \equiv 2 \pmod 4$. By the multiplicativity of $\sigma_1$,
 \begin{align*}
    \sigma_1(m) = \sigma_1(2)\sigma_1(M)=3\sigma_1(M) \equiv -\sigma_1(M) \pmod 4.
 \end{align*}
 Evaluating the given in the context of \eqref{reduced pn+a},
 \begin{align*}
    b_6(m) \equiv -2\sigma_1(M)+2\sigma_0(M) \pmod 4.
 \end{align*}
 Applying $2\sigma_1(M) \equiv 2\sigma_0(M) \pmod 4$, 
 \begin{align*}
    b_6(m) \equiv -2\sigma_0(M)+2\sigma_0(M) \equiv 0 \pmod 4.
 \end{align*}

 For $m \equiv 3 \pmod 4$, $j=0$ and $m=M=4k+3$. Its product, $m+1=4k+4 \equiv 0 \pmod 4$, eliminates the first term entirely, producing 
 \begin{align*}
        b_6(m) \equiv 2\sigma_0(M) \pmod 4.
 \end{align*}
 Further, let $m \equiv a\pmod p$, since $a$ is a quadratic non-residue and $m = M$, $\sigma_0(m) = \sigma_0(M)$ is even. Thus we have
 \begin{align*}
    b_6(m) \equiv 2\sigma_0(M) \equiv 0 \pmod 4,
 \end{align*}
 which completes the proof in this case.

 For $m \equiv 0 \pmod 4$, $j \geq 2$, implying that $m(m+1) \equiv 0 \pmod 4$. As before, we have $b_6(m) \equiv 2\sigma_0(M) \pmod 4$.
 If $j$ is even, then since $m$ is a quadratic nonresidue modulo $p$, we have the equation of Legendre symbols $-1 = \legendre{m}{p} = \legendre{2}{p}^j \legendre{M}{p} = \legendre{M}{p}$, so in particular $M$ is a quadratic nonresidue modulo $p$.
 If $j$ is odd, then we have
 \begin{align*}
    m=2(2^{\frac{j-1}{2}}y)^2.
 \end{align*}
 As $p \equiv 1 \pmod 8$, the law of quadratic reciprocity dictates that $\legendre{2}{p} =1$, and this again implies $\legendre{M}{p} = -1$. Therefore, in any case we must have that $M$ is a quadratic nonresidue modulo $p$. In particular, $M$ is not a square, so $\sigma_0(M)$ is even and it follows again that
\begin{align*}
    b_6(m) \equiv 2\sigma_0(M) \equiv 0 \pmod 4,
\end{align*}
which completes the proof.
\end{proof}

Although the theorem above does not extend to all $H_k$, a slight modification does, which we prove below.

\begin{theorem} \label{Thm: even progressions}
    Let $p$ be an odd prime, and let $a$ be an odd integer such that $\legendre{a}{p} = -1$. For any even integer $k \geq 6$,
    \begin{align*}
        b_k(2pn+a) \equiv 0 \pmod{16}.
    \end{align*}
\end{theorem}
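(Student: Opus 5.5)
The idea is to pair each divisor $d$ of $m$ with its complementary divisor, after factoring the summands of $b_k(m)$ so that each complementary pair visibly contributes a multiple of a large power of $2$. Write $m = 2pn+a$. Since $a$ is odd, $m$ is odd, so every divisor $d \mid m$ is odd. Since $m \equiv a \pmod p$ and $\legendre{a}{p} = -1$, $m$ is not a perfect square. Hence the map $d \mapsto e := m/d$ is a fixed-point-free involution on the divisors of $m$, and the divisors split into pairs $\{d,e\}$ with $de = m$, $d \neq e$, both odd.

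Because $b_k(m)$ is an integer, it suffices to show $2^7 \mid 24\, b_k(m)$ for $k \geq 8$, since $24 = 2^3 \cdot 3$. For $k = 6$ it suffices to show $2^5 \mid 6\, b_6(m)$. In both cases I will write the numerator as a sum over divisors and then regroup by complementary pairs.

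For $k \geq 8$ the contribution of a single divisor $d$ to the numerator is
\begin{align*}
    -m^2 d^{k-7} + (m^2+1)d^{k-5} - d^{k-3} = -d^{k-7}\lp d^4 - (m^2+1)d^2 + m^2 \rp = -d^{k-7}(d^2-1)(d^2-m^2).
\end{align*}
Substituting $m = de$ gives $d^2 - m^2 = d^2(1-e^2)$, so the contribution becomes $d^{k-5}(d^2-1)(e^2-1)$. Adding the contribution of $e$, the pair $\{d,e\}$ contributes
\begin{align*}
    (d^2-1)(e^2-1)\lp d^{k-5} + e^{k-5} \rp.
\end{align*}
For odd $d$ we have $8 \mid d^2-1$, so $64 \mid (d^2-1)(e^2-1)$. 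Also $d^{k-5} + e^{k-5}$ is a sum of two odd numbers, hence even. So each pair contributes a multiple of $2^7$, and summing over pairs gives the claim.

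For $k = 6$ the contribution of $d$ is $d(m^2-m+1) - d^3$. Using $m = de$, the pair $\{d,e\}$ contributes
\begin{align*}
    d^3(e^2-1) + e^3(d^2-1) - (d+e)(de-1).
\end{align*}
The one real step of the proof is the algebraic identity that this expression equals
\begin{align*}
    (d+e)(d^2-1)(e^2-1).
\end{align*}
To verify it, expand and collect the common factor $d+e$: one gets $d^2e^2(d+e) - (d+e)(d^2-de+e^2) - de(d+e) + (d+e)$, which simplifies to the product above. Since $d+e$ is even and $64 \mid (d^2-1)(e^2-1)$, each pair contributes a multiple of $2^7$, which is more than the required $2^5$.

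Once these factorizations are found, the rest is immediate. The only hypotheses used are that $m$ is odd (from $a$ odd) and that $m$ is not a square (from the nonresidue condition).
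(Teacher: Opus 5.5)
Your proof is correct and matches the paper's argument: pair each divisor $d$ of the odd non-square $m$ with $e=m/d$, then use the factorizations $(d+e)(d^2-1)(e^2-1)$ for $k=6$ and $(d^2-1)(e^2-1)(d^{k-5}+e^{k-5})$ for $k\geq 8$. You also rightly skip the paper's preliminary characterization of non-squares via Dirichlet's theorem, since only the trivial direction is needed ($m\equiv a \pmod{p}$ with $a$ a nonresidue forces $m$ not to be a square).
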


\begin{proof}
    We first prove that the condition that an integer $m \geq 1$ belongs to some arithmetic progression $2pn+a$ for at least one prime $p$ and quadratic nonresidue $a \pmod{2p}$ with $a$ odd if and only if $m$ is not a perfect square. Since perfect squares are residues in any modulus, we need only consider the converse. So, suppose that $m \geq 1$ is not a perfect square. We may assume without loss of generality that $m$ is squarefree since $\legendre{k^2}{p}=1$ for all $k$ and $p \nmid k$, and we may assume $m$ is odd since there are infinitely many primes $p$ with $\legendre{2}{p}$ by Dirichlet's theorem and quadratic reciprocity. We then write $m = q_1 \cdots q_r$ as a product of odd primes. Pick an integer $b$ so that $\legendre{b}{q_1} = -1$, and we construct a residue class $x$ by the Chinese Remainder Theorem that satisfies $x \equiv 1 \pmod{4}$, $x \equiv b \pmod{q_1}$, and $x \equiv 1 \pmod{q_j}$ for $j = 2, 3, \dots, r$. By Dirichlet's theorem, there is a prime $p \equiv x \pmod{4m}$, and this prime satisfies $p \equiv 1 \pmod{4}$. Using the law of quadratic reciprocity and the multiplicativity of the Legendre symbol, we then have
    \begin{align*}
        \legendre{m}{p} = \prod_{j=1}^r \legendre{q_j}{p} = \prod_{j=1}^r \legendre{p}{q_j} = -1,
    \end{align*}
    and so $m$ is both odd and a quadratic nonresidue modulo $p$, and thus $m = 2pn+a$ for some $a$ which is a quadratic nonresidue modulo $p$.
    
    We now begin the proof proper. By the prior discussion, it will suffice to prove that for every $m$ which is not a perfect square, we have
    \begin{align*}
        (m^2-m+1)\sigma_1(m)-\sigma_3(m) &\equiv 0 \pmod {32}
    \end{align*}
    and for every $k \geq 8$ even
    \begin{align*}
        -m^2 \sigma_{k-7}(m) + (m^2+1)\sigma_{k-5}(m) - \sigma_{k-3}(m) \equiv 0 \pmod{128}.
    \end{align*}
    Since $m$ is not a square, every divisor $d \mid m$ is not equal to its complementary divisor $\frac{m}{d}$. For $k = 6$, the contribution of a pair $\{d, m/d\}$ to $(m^2-m+1)\sigma_1(m)-\sigma_3(m)$ is given by 
    \begin{align*}
        (m^2-m+1)\left(d + \frac{m}{d}\right) - \left(d^3 + \frac{m^3}{d^3}\right) &= \left(d + \frac{m}{d}\right) \left[ (m^2-m+1) - \left(d^2 - m + \frac{m^2}{d^2}\right) \right] \\
        &= \left(d + \frac{m}{d}\right) \left(m^2 - d^2 - \frac{m^2}{d^2} + 1\right) \\
        &= \left(d + \frac{m}{d}\right) (d^2 - 1) \left(\frac{m^2}{d^2} - 1\right).
    \end{align*}

    Since $d$ and $m/d$ are odd integers, we have $(d^2 - 1)\left(\frac{m^2}{d^2} - 1\right) \equiv 0 \pmod{64}$ since the square of an odd number is always $1 \pmod{8}$, and this establishes the $k=6$ case.

    For any even integer $k \ge 8$, the contribution of $\{d, m/d\}$ to $-m^2 \sigma_{k-7}(m) + (m^2+1)\sigma_{k-5}(m) - \sigma_{k-3}(m)$ is given by 
    \begin{align*}
        &-m^2\left(d^{k-7} + \left(\frac{m}{d}\right)^{k-7}\right) + (m^2+1)\left(d^{k-5} + \left(\frac{m}{d}\right)^{k-5}\right) - \left(d^{k-3} + \left(\frac{m}{d}\right)^{k-3}\right) \\
        &\quad = (d^2 - 1)\left(\frac{m^2}{d^2} - 1\right) \left[ d^{k-5} + \left(\frac{m}{d}\right)^{k-5} \right].
    \end{align*}

    As before, $(d^2 - 1)\left(\frac{m^2}{d^2} - 1\right) \equiv 0 \pmod{64}$. Furthermore, for any even $k \ge 8$, the $d^{k-5} + (m/d)^{k-5}$ is the sum of two odd integers and is therefore even. Therefore, each summand is divisible by $128$ and the proof is complete.
\end{proof}

\section{Proof of Theorem \ref{Thm: Infinitely many uniform congruences}} \label{Sec: Serre}

The proof of Theorem \ref{Thm: Infinitely many uniform congruences} follows from the analogous non-uniform version of this result for mixed weight quasimodular forms, as stated below.

\begin{theorem} \label{Thm: Infinitely many congruences}
    Let $f \in \widetilde{M}_{\leq k} \cap \Z[[q]]$ have $\sum_{n \geq 0} a_f(n) q^n$, and let $M \gg 0$ be an integer. Then there are infinitely many non-nested arithmetic progressions $An + B$ such that
    \begin{align*}
        a_f\lp A n + B \rp \equiv 0 \pmod{M}.
    \end{align*}
\end{theorem}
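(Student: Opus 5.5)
Our plan is to reduce to holomorphic modular forms of integral weight and then apply Serre's theorem on the density of primes $\ell$ for which the Hecke operator $T_\ell$ annihilates a modular form modulo $M$. Throughout, $\widetilde{M}_{\leq k}$ denotes the mixed-weight quasimodular forms of weight at most $k$.

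\textbf{Step 1: reduction to modular forms.} Every quasimodular form is a polynomial in $G_2$ with modular coefficients. The structure theorem of Kaneko--Zagier sharpens this: each $f \in \widetilde{M}_{\leq k}$ can be written as
\begin{align*}
    f = \sum_{j} D^{j} g_j + c\, D^{r} G_2 ,
\end{align*}
with $g_j$ holomorphic modular forms of various weights on $\mathrm{SL}_2(\Z)$ and $c$ a constant. The $g_j$ may have rational rather than integral coefficients. We first clear denominators, replacing $M$ by $NM$ for a suitable integer $N$ so that every $g_j$ has $N$-integral coefficients. Since the $b_k$ (or any $a_f$) then differ from an integral combination only by a bounded denominator, it suffices to prove congruences modulo $NM$ for $\sum_j D^j(Ng_j) + cN D^r G_2$.

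The term $D^r G_2$ needs separate care. Its coefficients are $n^r \sigma_1(n)$, and $\sigma_1$ is multiplicative. For $\ell \equiv -1 \pmod{M}$ we have $\sigma_1(\ell n') = (1+\ell)\sigma_1(n') \equiv 0 \pmod M$ when $\ell \nmid n'$. So progressions of the shape $\ell^2 n + \ell b$ with $\ell\nmid b$ kill this term. Alternatively, we can note that $G_2 - \ell G_2(\ell z)$ is modular on $\Gamma_0(\ell)$ and treat it the same way as the $g_j$.

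\textbf{Step 2: Serre's theorem.} Serre's theorem (via the $\ell$-adic Galois representations attached to eigenforms) states the following for finitely many integral modular forms $g_1,\dots,g_s$ of integral weight on $\Gamma_0(N')$ and any modulus $M'$: a positive-density set of primes $\ell \equiv -1 \pmod{N'M'}$ satisfies
\begin{align*}
    g_i \mid T_\ell \equiv 0 \pmod{M'} \quad \text{for all } i .
\end{align*}
Applying this simultaneously to the finitely many $g_j$ (and the $G_2$ piece), we get for each such $\ell$
\begin{align*}
    a_{g}(\ell n) + \chi(\ell)\ell^{w-1} a_g(n/\ell) \equiv 0 \pmod{M'} .
\end{align*}
Taking $n = \ell n'$ with $\ell \nmid n'$ gives $a_g(\ell n') \equiv 0$. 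The derivative $D^j$ multiplies coefficients by $n^j$, which preserves these vanishings. Hence $a_f(\ell^2 n + \ell b) \equiv 0 \pmod{M}$ for every $b$ with $1 \leq b < \ell$.

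\textbf{Step 3: non-nested progressions.} Different primes $\ell$ give moduli $\ell^2$. Progressions $\ell^2 n + \ell b$ and $\ell'^2 n + \ell' b'$ for distinct primes are not contained in one another, since each progression contains integers exactly divisible by its own $\ell$ and avoiding $\ell'$. This yields infinitely many non-nested progressions.

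\textbf{Main obstacle.} The hardest step will be the bookkeeping in Step 1: writing a general mixed-weight quasimodular form in terms of derivatives of holomorphic modular forms plus derivatives of $G_2$, with controlled denominators. This is where the hypothesis $M \gg 0$ (or a suitable divisibility condition on $M$) must absorb the denominators. We will first try the Kaneko--Zagier decomposition and track $p$-integrality directly.

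For Theorem \ref{Thm: Infinitely many uniform congruences}, uniformity in $k$ requires one more idea. The $b_k(n)$ modulo $M$ depend only on $k$ modulo $\lambda(M)$-type periods by Euler's theorem applied to $d^{k}$, except for divisors sharing factors with $M$. Restricting to progressions coprime to $M$ therefore leaves finitely many distinct $H_k$ modulo $M$, and Serre's theorem can be applied to all of them simultaneously.
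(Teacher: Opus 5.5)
Your proposal is correct and uses the same engine as the paper. That engine is Serre's theorem: for finitely many integral modular forms, a positive density of primes $\ell \equiv -1 \pmod{N'M'}$ have $T_\ell$ annihilating all of them modulo $M'$. From this one reads off $a_g(\ell n') \equiv 0$ for $\ell \nmid n'$, which gives the progressions $\ell^2 n + \ell b$.

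Where you genuinely differ is the passage from quasimodular to modular forms.

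\begin{itemize}
\item \textbf{The paper's route.} It invokes Serre's theorem that $E_2$ is a $p$-adic modular form, so $f$ is congruent modulo $M$ to a mixed-weight modular form. It then handles the finitely many weight components.
\item \textbf{Your route.} In each weight you use the Kaneko--Zagier decomposition $\widetilde{M}_w = \bigoplus_j D^j M_{w-2j} \oplus \C\, D^{w/2-1} G_2$. You note that $D^j$ only multiplies the $n$th coefficient by $n^j$. You then dispose of the leftover $D^{r} G_2$ terms by hand via $\sigma_1(\ell n') = (1+\ell)\sigma_1(n')$, since the Serre primes already satisfy $\ell \equiv -1 \pmod{NM}$.
\end{itemize}

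Your route avoids $p$-adic modular forms altogether. That includes the primes $2,3$, and the CRT gluing of different weights implicitly needed to make ``$E_2$ is a modular form modulo $M$'' precise for composite $M$. It also makes the progressions and their non-nestedness explicit, which the paper delegates to Amdeberhan--Ono--Singh. The paper's route is shorter and does not depend on the level-one structure theorem.

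The denominator bookkeeping you flag as the main obstacle is routine. The decomposition is unique and defined over $\Q$, and each piece lies in a finite-dimensional space with a rational basis. So scaling by one integer $N$ and working modulo $NM$ suffices.

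Two small corrections:
\begin{itemize}
\item In Step 2, the vanishing $a_g(\ell n') \equiv 0 \pmod{M'}$ comes from the coefficient of $q^{n'}$ in $g \mid T_\ell$ with $\ell \nmid n'$ (so $a_g(n'/\ell) = 0$). It does not come from the coefficient of $q^{\ell n'}$.
\item Drop the ``alternative'' treatment of $G_2$ via $G_2 - \ell G_2(\ell z)$. It makes the level depend on the very prime $\ell$ you are trying to produce.
\end{itemize}

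In your closing remark on uniformity, restricting to progressions coprime to $M$ is unnecessary. For $k$ large, $d^k$ modulo $p^e \,\|\, M$ is periodic in $k$ even when $p \mid d$, because it is eventually $0$. You should also work modulo $24M$ to absorb the denominator in $b_k(n)$.
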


\begin{proof}[Sketch of Proof]
    Since $E_2$ is a modular form modulo $M$ due to Serre \cite{Serre p-adic}, we need only consider mixed weight modular forms, and since a mixed weight modular form contains only finitely many fixed weight components, it suffices to consider modular forms of pure weight modulo $M$. The result is a much simplified variation of Ono's argument for the analogous result for the partition function \cite{Ono}, namely, it leverages results of Serre on the Chebotarev density theorem, Hecke operators and Galois representations \cite{SerreDiv,SerreChebotarev}. Precise details of the argument in this case are exactly analogous to the more recent work \cite[Theorem 1.7]{AmdeberhanOnoSingh}, which also deals with mixed weight quasimodular forms.
\end{proof}

We may then prove the rest of Theorem \ref{Thm: Infinitely many uniform congruences}.

\begin{proof}[Proof of Theorem \ref{Thm: Infinitely many uniform congruences}]
    Every form $H_k$ individually satisfies the criteria of Theorem \ref{Thm: Infinitely many congruences}, and so each $H_k$ has infinitely many non-nested arithmetic progressions $An+B$ for which $b_k(An+B) \equiv 0 \pmod{M}$. Since by Euler's theorem the $H_k$ only occupy only finitely many distinct classes in $\lp \Z/M\Z \rp[[q]]$ for any $M \geq 2$ (since $H_k \equiv H_{k+\phi(M)} \pmod{M}$ when $k \gg 0$ is large enough so that $d^k$ is either coprime to $M$ or is forced to have prime factors of multiplicity larger than those of $M$), one can construct infinitely many arithmetic progressions for which the congruence $b_k(An+B) \equiv 0 \pmod{M}$ holds uniformly in $k$ by the Chinese Remainder Theorem. This completes the proof.
\end{proof}

\section{Proof of Theorem \ref{Thm: Multiplicative Congruences}} \label{Sec: Multiplicative}

In this section, we utilize the multiplicative structure of divisor sums and elementary divisibility lemmas to prove Theorem \ref{Thm: Multiplicative Congruences}. To this end, in this section we will denote by $\nu_p(n)$ the multiplicity of the prime $p$ as a factor of $n$. We begin with the following lemma.

\begin{lemma} \label{Lem: Divisibility}
    Suppose $p>2$ is a prime, and let $x,y$ be integers with $x \equiv y \not \equiv 0 \pmod{p}$. Then for any integer $n \geq 1$, we have
    \begin{align*}
        \nu_p\lp x^n - y^n \rp = \nu_p\lp x-y \rp + \nu_p(n).
    \end{align*}
\end{lemma}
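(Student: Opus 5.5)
This is the Lifting-the-Exponent lemma for odd primes. I would prove it in two steps: first the case $p \nmid n$, then the case $n = p$, and finally combine them multiplicatively by factoring $n = p^e m$ with $p \nmid m$. Throughout I use the factorization
\begin{align*}
    x^n - y^n = (x-y)\lp x^{n-1} + x^{n-2}y + \cdots + y^{n-1} \rp,
\end{align*}
so the task reduces to computing $\nu_p$ of the second factor, which I denote $S_n(x,y)$. If $x = y$, both sides are infinite under the convention $\nu_p(0)=\infty$, so I assume $x \neq y$.

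\emph{Step 1: $p \nmid n$.} Since $x \equiv y \pmod{p}$, each term $x^{n-1-i}y^i$ of $S_n(x,y)$ is congruent to $x^{n-1}$ modulo $p$. Hence $S_n(x,y) \equiv n x^{n-1} \not\equiv 0 \pmod{p}$, because $p \nmid n$ and $p \nmid x$. Therefore $\nu_p(x^n - y^n) = \nu_p(x-y)$, as claimed.

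\emph{Step 2: $n = p$.} Write $y = x + t$ with $\nu_p(t) = \nu_p(x-y) \geq 1$. Expand $y^i = x^i + i x^{i-1} t + O(t^2)$, where the error term is an integer multiple of $t^2$. This gives
\begin{align*}
    S_p(x,y) \equiv p x^{p-1} + x^{p-2} t \sum_{i=0}^{p-1} i \pmod{t^2}.
\end{align*}
Since $p$ is odd, $\sum_{i=0}^{p-1} i = \frac{p(p-1)}{2}$ is divisible by $p$, so the middle term is divisible by $pt$, and hence by $p^2$. The remainder is divisible by $t^2$, hence also by $p^2$. So $S_p(x,y) \equiv p x^{p-1} \pmod{p^2}$. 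Because $p \nmid x$, this gives $\nu_p(S_p(x,y)) = 1$, and therefore $\nu_p(x^p - y^p) = \nu_p(x-y) + 1$.

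\emph{Step 3: combining.} Write $n = p^e m$ with $p \nmid m$. Apply Step 2 repeatedly to the pairs $(x^{p^j}, y^{p^j})$ for $j = 0, \dots, e-1$; this is valid because these pairs remain congruent and nonzero modulo $p$. This yields $\nu_p(x^{p^e} - y^{p^e}) = \nu_p(x-y) + e$. Then apply Step 1 to the pair $(x^{p^e}, y^{p^e})$ with exponent $m$, which gives $\nu_p(x^n - y^n) = \nu_p(x-y) + e$. Since $e = \nu_p(n)$, the lemma follows.

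The main obstacle is Step 2. There the oddness of $p$ is essential: it is what makes $p$ divide $\frac{p(p-1)}{2}$, and the lemma indeed fails for $p=2$ (for example, $x=1$, $y=-1$, $n=2$ gives $\nu_2(0)$ versus a finite value, and $x=3$, $y=1$, $n=2$ gives $3 \neq 2$). I also need care that the $O(t^2)$ terms stay integral, which holds because they come from a binomial expansion with integer coefficients.
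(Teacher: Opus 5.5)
Your proof is correct; it is the standard lifting-the-exponent argument, and each step checks out, including the integrality of the $O(t^2)$ terms and the passage from a congruence modulo $t^2$ to one modulo $p^2$.

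You start from the same factorization $x^n-y^n=(x-y)S_n(x,y)$ as the paper, but the paper's proof stops at what is essentially your Step 1. It observes $S_n(x,y)\equiv nx^{n-1}\pmod p$ and concludes directly that $\nu_p(S_n(x,y))=\nu_p(n)$. That inference is valid only when $p\nmid n$, where both sides are $0$. When $p\mid n$, a congruence modulo $p$ shows only that $\nu_p(S_n(x,y))\geq 1$; it cannot determine the exact valuation.

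Your Step 2 computes $S_p(x,y)$ modulo $p^2$ via $y=x+t$. Your Step 3 iterates over the pairs $(x^{p^j},y^{p^j})$ and then applies Step 1 with exponent $m$. Together these supply exactly the missing case.

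A clear sign that these steps are needed is that the paper's argument never uses $p>2$, yet your examples show the statement is false for $p=2$. In your proof the hypothesis enters precisely through $p\mid \frac{p(p-1)}{2}$. (Incidentally, the last term of the paper's cofactor should be $y^{n-1}$, not $y^n$.)

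So your route is not just an alternative but the complete version of the paper's sketch. The missing case is also the one used later: Proposition \ref{Prop: Divisors} is applied in the proof of Theorem \ref{Thm: Multiplicative Congruences} with $\ell+1=dp$, that is, with $p\mid n$.
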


\begin{proof}
    The result follows since
    \begin{align*}
        x^n - y^n = \lp x-y \rp \lp x^{n-1} + x^{n-2} y + x^{n-3} y^2 + \dots + y^n \rp
    \end{align*}
    and
    \begin{align*}
        x^{n-1} + x^{n-2} y + x^{n-3} y^2 + \dots + y^n \equiv n x^{n-1} \pmod{p},
    \end{align*}
    so $\nu_p\lp x^{n-1} + x^{n-2} y + x^{n-3} y^2 + \dots + y^n \rp = \nu_p(n)$.
\end{proof}

From this lemma, we are able to deduce the following divisibility result for $\sigma_k$-values for powers of primes.

\begin{proposition} \label{Prop: Divisors}
    Let $p>2$ be prime, and let $k \geq 1$ be an odd integer, and let $d = \ord_p(2)$, i.e. $2^d-1 \equiv 0 \pmod{p}$ with minimal $d \geq 1$. Then for any $\ell \geq 1$, we have
    \begin{align*}
        \nu_p\lp \sigma_k(2^\ell) \rp = \begin{cases}
            0 & d \nmid k(\ell+1), \\
            \nu_p(\ell+1) & d|k, \\
             \nu_p(2^d-1) + \nu_p(\ell+1) + \nu_p(k) - \nu_p(d) & \text{otherwise}.
        \end{cases}
    \end{align*}
\end{proposition}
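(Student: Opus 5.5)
The approach is to write the divisor sum as a quotient of two Mersenne-type numbers and compute the $p$-adic valuation of numerator and denominator separately with Lemma \ref{Lem: Divisibility}. Since the divisors of $2^\ell$ are $1, 2, \dots, 2^\ell$, summing the geometric series gives
\begin{align*}
    \sigma_k(2^\ell) = \sum_{j=0}^{\ell} 2^{jk} = \frac{2^{k(\ell+1)}-1}{2^k-1},
\end{align*}
so that
\begin{align*}
    \nu_p\lp \sigma_k(2^\ell) \rp = \nu_p\lp 2^{k(\ell+1)}-1 \rp - \nu_p\lp 2^k-1 \rp.
\end{align*}
Everything therefore reduces to evaluating $\nu_p(2^m-1)$ for a general integer $m \geq 1$.

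\textbf{The auxiliary formula.} Since $d = \ord_p(2)$, we have $p \mid 2^m-1$ if and only if $d \mid m$, so $\nu_p(2^m-1)=0$ when $d \nmid m$. When $d \mid m$, apply Lemma \ref{Lem: Divisibility} with $x = 2^d$, $y = 1$ and exponent $m/d$. This is legitimate because $p>2$ and $x \equiv y \equiv 1 \not\equiv 0 \pmod p$. It gives
\begin{align*}
    \nu_p(2^m-1) = \nu_p(2^d-1) + \nu_p(m/d) = \nu_p(2^d-1) + \nu_p(m) - \nu_p(d).
\end{align*}
Here $\nu_p(d)=0$ in fact, since $d \mid p-1$, but I will keep the term to match the stated formula.

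\textbf{The three cases.} I then split according to divisibility by $d$.
\begin{itemize}
    \item If $d \nmid k(\ell+1)$, then also $d \nmid k$. Both valuations vanish, and the answer is $0$.
    \item If $d \mid k$, then $d \mid k(\ell+1)$ as well, and both terms are given by the auxiliary formula. Subtracting, the constants $\nu_p(2^d-1)$ and $\nu_p(d)$ cancel, leaving
    \begin{align*}
        \nu_p(k(\ell+1)) - \nu_p(k) = \nu_p(\ell+1).
    \end{align*}
    \item Otherwise $d \mid k(\ell+1)$ but $d \nmid k$. The denominator then contributes $0$, and the numerator contributes
    \begin{align*}
        \nu_p(2^d-1) + \nu_p(k) + \nu_p(\ell+1) - \nu_p(d).
    \end{align*}
\end{itemize}
These three cases match the statement exactly, and they exhaust all possibilities because the first two are disjoint.

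There is no real obstacle. The only points needing care are checking the hypotheses of Lemma \ref{Lem: Divisibility} (oddness of $p$ and $x \equiv y \not\equiv 0 \pmod p$), and noting that $p \mid 2^m-1 \iff d \mid m$, which holds by the definition of multiplicative order. The hypothesis that $k$ is odd is not needed for this computation.
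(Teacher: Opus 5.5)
Your proof is correct and follows essentially the same route as the paper: write $\sigma_k(2^\ell) = (2^{k(\ell+1)}-1)/(2^k-1)$, then apply Lemma \ref{Lem: Divisibility} in each of the three cases. The only cosmetic difference is in the case $d \mid k$: you package the lemma into a general formula for $\nu_p(2^m-1)$ and subtract the two valuations, whereas the paper applies the lemma directly with $x = 2^k$, $y = 1$ and exponent $\ell+1$.
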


\begin{proof}
    We can begin by expressing the divisor sum as the geometric series 
    \begin{align*}
        \sigma_k(2^{\ell}) = \sum_{j=0}^{\ell} (2^k)^j = \frac{(2^k)^{\ell+1} - 1}{2^k - 1}.
    \end{align*}
    If $d\nmid k(\ell+1)$, then it is immediately clear that $\nu_p\lp \sigma_k(2^\ell) \rp = 0$.
    
    So assume that $d\mid k(\ell+1)$. Applying $\nu_{p}$ to the geometric formula for $\sigma_k(2^\ell)$ yields
    \begin{align*}
         \nu_p\lp \sigma_k(2^{\ell}) \rp = \nu_p\lp (2^k)^{\ell+1} - 1 \rp - \nu_p(2^k - 1).
    \end{align*}
    If $d|k$, then by Lemma \ref{Lem: Divisibility}, we have
    \begin{align*}
        \nu_p\lp 2^{k(\ell+1)}-1^{\ell+1} \rp = \nu_p(2^k-1) + \nu_p(\ell+1).
    \end{align*}
    Therefore, when $d|k$ we have
    \begin{align*}
        \nu_p(\sigma_k(2^\ell)) = \lp \nu_p(2^k-1) + \nu_p(\ell+1) \rp - \nu_p(2^k-1) = \nu_p(\ell+1).
    \end{align*}
    If $d \nmid k$, then $\nu_p(2^k-1)=0$ and Lemma \ref{Lem: Divisibility} implies
    \begin{align*}
        \nu_p\lp \sigma_k(2^\ell) \rp = \nu_p\lp (2^d)^{\frac{k(\ell+1)}{d}} - 1^{\frac{k(\ell+1)}{d}} \rp &= \nu_p(2^d-1) + \nu_p\lp \frac{k(\ell+1)}{d} \rp \\
        &= \nu_p(2^d-1) + \nu_p(\ell+1) + \nu_p(k) - \nu_p(d).
    \end{align*}
    This completes the proof.
\end{proof} 

\begin{theorem} \label{Thm: Multiplicative Congruences}
    Let $p>3$ be a prime, and let $d = \ord_p(2)$. Then for every even $k \geq 6$ and $n \geq 0$, we have
    \begin{align*}
        b_k\lp 2^{dp}n + 2^{dp-1} \rp \equiv 0 \pmod{p}.
    \end{align*}
\end{theorem}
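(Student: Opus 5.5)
The plan is to exploit the multiplicative structure of the argument rather than pairing divisors. The key observation is that every integer in the progression factors as
\begin{align*}
    m := 2^{dp}n + 2^{dp-1} = 2^{dp-1}(2n+1),
\end{align*}
so $m = 2^{\ell} M$ with $\ell = dp-1$ and $M = 2n+1$ odd. In particular $\ell + 1 = dp$ is independent of $n$. Since $\sigma_j$ is multiplicative and $\gcd(2^\ell, M) = 1$, we have $\sigma_j(m) = \sigma_j(2^\ell)\,\sigma_j(M)$ for every $j$. It will therefore suffice to show that $p \mid \sigma_j(2^{dp-1})$ for every odd $j \geq 1$.

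First I would handle the denominators. Because $p > 3$, the integers $6$ and $24$ are invertible modulo $p$. Hence $b_k(m) \equiv 0 \pmod{p}$ follows once the numerator $(m^2-m+1)\sigma_1(m) - \sigma_3(m)$ (for $k=6$), respectively $-m^2\sigma_{k-7}(m) + (m^2+1)\sigma_{k-5}(m) - \sigma_{k-3}(m)$ (for $k \geq 8$ even), is divisible by $p$. The exponents appearing are $1, 3$ and $k-7, k-5, k-3$. All of these are odd and at least $1$; note that $k=8$ gives $k-7=1$. Thus each numerator is a $\Z$-linear combination of terms $\sigma_j(2^\ell)\sigma_j(M)$ with $j$ odd, and it is enough to prove $p \mid \sigma_j(2^{dp-1})$ for each such $j$.

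This last divisibility is where Proposition \ref{Prop: Divisors} does the work, applied with $\ell + 1 = dp$ and the odd integer $j$ in place of $k$. The case $d \nmid j(\ell+1)$ cannot occur, since $d \mid jdp$. If $d \mid j$, the proposition gives $\nu_p(\sigma_j(2^\ell)) = \nu_p(dp) \geq 1$. Otherwise it gives
\begin{align*}
    \nu_p(2^d-1) + \nu_p(dp) + \nu_p(j) - \nu_p(d) = \nu_p(2^d-1) + 1 + \nu_p(j) \geq 2,
\end{align*}
using $\nu_p(dp) = \nu_p(d) + 1$. In either case $\sigma_j(2^{dp-1}) \equiv 0 \pmod{p}$, which gives the claim for all $n \geq 0$ and all even $k \geq 6$.

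The only real subtlety, and the step I expect to be the main obstacle, is the case $d \mid j$. There the naive argument ``$2^{jdp} \equiv 1 \pmod{p}$, so the numerator of the geometric series $\sigma_j(2^\ell) = \frac{2^{j(\ell+1)}-1}{2^j-1}$ vanishes'' fails, because the denominator $2^j - 1$ is also divisible by $p$. The factor $p$ in $\ell + 1 = dp$ is exactly what is needed here: through Lemma \ref{Lem: Divisibility}, as packaged in Proposition \ref{Prop: Divisors}, it contributes the one extra power of $p$ in the numerator that survives the cancellation.
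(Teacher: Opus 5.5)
Your proof is correct and follows the paper's route: factor the progression as $2^{dp-1}(2n+1)$, use multiplicativity of the divisor sums, and apply Proposition \ref{Prop: Divisors} with $\ell+1=dp$ to obtain $p \mid \sigma_j(2^{dp-1})$. You are more careful than the paper, which states the reduction only for $\sigma_{k-1}$; you check every odd exponent $j \in \{1,3\}\cup\{k-7,k-5,k-3\}$ that actually occurs in $b_k$, and you note that $6$ and $24$ are invertible modulo $p$.
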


\begin{remark}
    Note that Corollary \ref{Cor: Mersenne primes} follows from Theorem \ref{Thm: Multiplicative Congruences} by setting $2^d-1=p$.
\end{remark}

\begin{proof}
    Let $d = \ord_p(2)$, and let $k \geq 6$ be any even integer. Consider the values
    $\sigma_{k-1}\lp 2^{dp}n + 2^{dp-1} \rp$ for $n \geq 0$. Then by the definition of $b_k(n)$, if the values $\sigma_{k-1}\lp 2^{dp-1} \rp$ have a common prime divisor $p>3$ for every $k \geq 6$ even, then the desired universal congruence follows by the multiplicativity of $\sigma_{k-1}(n)$. Now, by Proposition \ref{Prop: Divisors} we have for any prime $p>3$ that
    \begin{align*}
        \nu_p\lp \sigma_k(2^{dp-1}) \rp = \begin{cases}
            \nu_p(dp) & d|(k-1), \\
             \nu_p(2^d-1) + \nu_p(dp) + \nu_p(k-1) - \nu_p(d) & \text{otherwise}.
        \end{cases}
    \end{align*}
    Since $\nu_p(dp) = \nu_p(d) + 1 > \nu_p(d)$, each case above yields a positive contribution, and therefore $p|\sigma_{k-1}(2^{dp-1})$ for every $k$ required, and so the congruence follows by multiplicativity of $\sigma_{k-1}$.
\end{proof}

\section{Conclusion and Conjectures} \label{Sec: Conclusion}

Due to Theorem \ref{Thm: Infinitely many uniform congruences}, we know that many additional moduli remain unexplored. Because of the way the proof proceeds, with many embedded applications of the Chinese Remainder Theorem and calculations in large Galois groups implicit in Serre's work, these congruences are difficult to identify explicitly. It is certainly worthwhile to search out and prove more of these congruences; we propose in this final section a number of directions which are natural to pursue based upon our data.

\subsection{Conjectural classification of $\Omega$ by congruences}

Results like Theorem \ref{Thm: Infinitely many uniform congruences} suggest the potential for very strong applications in modular form theory. The existence (and explicitly construction of) these uniformly valid congruences creates testing criteria for whether a randomly chosen mixed weight quasimodular for $f \in \widetilde{M}_{\leq k}$ could be a prime-detecting form, since any $f$ not satisfying even a single such congruence cannot be in the space $\Omega$ generated by the prime-detecting forms. The collection of all $f \in \widetilde{M}_{\leq k} \cap \Z[[q]]$ whose coefficients $a_f(n)$ satisfy a particular congruence $a_f(An+B) \equiv 0 \pmod{M}$ is the kernel of a homomorphism $\widetilde{M}_{\leq k} \longrightarrow \lp \Z/M\Z \rp [[q]]$ defined by
\begin{align*}
    \sum a_f(n) q^n \mapsto \sum a_f(An+B) q^n \pmod{M}.
\end{align*}
Since the target space of such a homomorphism is a space of mixed weight quasimodular forms with higher level and with Nebentypus (due to classical trick of sieving via Dirichlet characters), the target space is finite-dimensional and so congruences are verifiable with a finite computation with an explicit basis, despite the fact that Sturm bounds are not known yet for such spaces (see \cite{CraigSturm} for partial progress in this direction).

In principle, if kernels of such maps are sufficiently well understood by a more developed theory of mixed weight modular and quasimodular forms, then it would be natural to study the space $\Omega$ as a subspace of these kernels. In particular, we propose the following local-to-global type question.

\begin{question}
    Given $k \geq 6$ even and $f \in \widetilde{M}_{\leq k} \cap \Z[[q]]$, is there an explicit (infinite) list of tuples $(A_j, B_j, M_j)$ such that $f$ obeys the Ramanujan-type congruences $a_f(A_j n + B_j) \equiv 0 \pmod{M_j}$ if and only if $f \in \Omega$?
\end{question}

No finite collection of moduli could resolve such a question, but Theorem \ref{Thm: Infinitely many uniform congruences} leaves open whether such a question can be answered affirmatively using some local-to-global argument. It is plausible that the results already proven in this work can only be satisfied by an element of the so-called Eisenstein space $\mathcal E$ of \cite{CraigIttersumOno}, generated linearly by Eisenstein series and their derivatives (and thus ignoring purely cuspidal components). It would be interesting to prove this rigorously.

\subsection{Analogs to Ramanujan's original work}

The study of congruences of the type we have mentioned originated in Ramanujan's study of $p(n)$, in which he wrote as follows \cite{Ramanujan}: \\[+0.1cm]

{\it I have proved a number of arithmetical properties of $p(n)$, in particular that
\begin{align*}
    p(5n+4) \equiv 0 \pmod{5}
\end{align*}
and
\begin{align*}
    p(7n+5) \equiv 0 \pmod{7} \dots
\end{align*}
I have since found another method which enables me to prove all of these properties and a variety of others, of which the most striking is
\begin{align*}
    p(11n+6) \equiv 0 \pmod{11}.
\end{align*}
There are corresponding properties in which the moduli are powers of 5, 7, or 11 $\dots$ It appears that there are no equally simple properties for any moduli involving primes other than these three.} \\[+0.1cm]

There are two key aspects of Ramanujan's statements which we have not addressed in the context of prime-detecting quasimodular forms. The first, the presence of congruences modulo powers of 5, 7, 11, has as its essence the idea that there should be a natural sequence of nested subprogressions of $5n+4$ which vanish modulo $5^m$ for each $m \geq 1$, and likewise for the primes 7 and 11. In the case of the partition function, these take the particular form
\begin{align*}
    p\left(\ell^{\alpha}n + \delta_{\ell}\right) \equiv 0 \pmod{\ell^{\alpha}}, \ \ \ \ \ \textrm{for } \ell = 5, 11, \ \ \ 24 \delta_\ell \equiv -1 \pmod{\ell^\alpha}
\end{align*}
and
\begin{align*}
    p\left(7^{\alpha}n + \delta_{7}\right) \equiv 0 \pmod{7^{\lfloor \alpha/2 \rfloor + 1}}, \ \ \ \ \ \textrm{for } 24\delta_7 \equiv -1 \pmod{7^\alpha}.
\end{align*}
Based on data gathered during the course of our research, we project that such results will hold for the primes $2, 3, 5, 7$ for which there were congruences of this particular type.

We also note Ramanujan's second comment, that for $p(n)$ the prime $11$ appeared to be the end of this observed pattern. The interpretation of Ramanujan's comment is that his three congruences are the only ones of the type
\begin{align*}
    p\lp \ell n + \beta \rp \equiv 0 \pmod{\ell}
\end{align*}
with $\ell$ prime; this was proven by Ahlgren and Boylan \cite{AhlgrenBoylan} using the filtration theory of modular forms modulo $p$. Based on our data, we project that the primes so far identified as also the only examples for which such congruences exist not just in the universal sense, but even for any particular $H_k$.

\begin{conjecture}
    For $k \geq 6$ even, $\ell$ prime and $0 \leq \beta < \ell$, any congruence
    \begin{align*}
        b_k\lp \ell n + \beta \rp \equiv 0 \pmod{\ell}
    \end{align*}
    must have $\ell = 2, 3, 5, 7$.
\end{conjecture}

This does not mean, however, that no prime-detecting quasimodular forms can have congruences of this type. In particular, consider the prime-detecting forms defined for odd integers $\ell > k \geq 1$ by
\begin{align*}
    f_{k,\ell}(q) := (D^\ell+1)G_{k+1} - (D^k+1)G_{\ell+1}.
\end{align*}
The Fourier coefficients of such forms are given by
\begin{align*}
    c_{k,\ell}(n) := \lp n^\ell+1 \rp \sigma_k(n) - \lp n^k + 1 \rp \sigma_\ell(n).
\end{align*}
Since $k,\ell$ are odd, $n+1$ divides each of $n^\ell+1$ and $n^k+1$, from which it follows that for every prime $p$ and every $n \geq 1$, we have
\begin{align*}
    c_{k,\ell}\lp pn - 1 \rp \equiv 0 \pmod{p}.
\end{align*}
It follows that the particular forms $f_{k,\ell}$ have these Ramanujan-type congruences for every prime. It is also straightforward to see that if $f \in \Omega$, then the $n$th coefficient of $Df$ vanishes modulo $p$ whenever $p|n$. It would be interesting to study further the presence of Ramanujan-type congruences for general prime-detecting quasimodular forms, to determine whether there are nontrivial examples of this phenomenon beyond those already discovered. We pose the following question as potentially interesting for further study in this area.

\begin{question}
    Which prime-detecting quasimodular forms $f$ possess a congruence $a_f(\ell n + \beta) \equiv 0 \pmod{\ell}$ for all primes $\ell$?
\end{question}

\subsection{Additional remarks}

In this paper, we have been able to enumerate a variety of congruences of different types for the prime-detecting quasimodular forms $H_k$. These congruences are by no means exhaustively studied, as our focus has been on enumerating those congruences which are shared in a uniform manner across all $H_k$. In many instances, certain $H_k$ have higher divisibilities than others in the progressions already studied than others do, and some $H_k$ have entirely separate progressions possessing divisibilities that other $H_k$ do not have at all.

We note that the congruences proven in this work mirror in many ways those discovered by Motomura and Suda \cite{MotomuraSuda} for certain MacMahon-type functions. Since these are related to the $H_k$ by the work of \cite{CraigIttersumOno}, it would be of interest to explore whether these types of congruences hold for general families of $q$-multiple zeta values, from which both quasimodular forms and the MacMahon-type functions emerge as special cases. It is probable that higher level $q$-multiple zeta values and prime-detecting quasimodular forms (see e.g. \cite{Craig}) have similar congruence results. It would also be interesting to determine whether the methods of \cite{CraigIttersumOno} through which $H_k$ are represented by $q$-multiple zeta values shed light either on additional congruences for $H_k$ or lead to new congruences discovered for coefficients of $q$-multiple zeta values.


\begin{thebibliography}{99}

\bibitem{AhlgrenBoylan} S. Ahlgren, M. Boylan, \emph{Arithmetic properties of the partition function}. Invent. Math. {\bf 153}, no. 3, (2003) 487--502.

\bibitem{AmdeberhanOnoSingh} T. Amdeberhan, K. Ono, A. Singh. \emph{MacMahon's sums-of-divisors and allied q-series}.
Adv. Math. {\bf 452} (2024), Paper No. 109820, 25 pp.

\bibitem{Craig} W. Craig. \emph{Higher level $q$-multiple zeta values with applications to quasimodular forms and partitions}. Mathematische Annalen {\bf 396}, 2 (2026).

\bibitem{CraigSturm} W. Craig. \emph{New types of Sturm bounds via $p$-adic transfer methods}. Preprint, \url{https://arxiv.org/abs/2602.10240}.

\bibitem{CraigIttersumOno} W. Craig, J-W. van Ittersum, K. Ono. \emph{Integer partition detect the primes}. Proc. Nat. Ac. Sci., {\bf 121} (39) (2024), e2409417121.

\bibitem{CraigMerca} W. Craig, M. Merca. \emph{On Ramanujan-type congruences for multiplicative functions}. Revista de la Real Academia de Ciencias Exactas, F\'{i}sicas y Naturales. Serie A. Matem\'{a}ticas {\bf 116}, 128 (2022).

\bibitem{KaneKrishnamoorthyLau} B. Kane, K. Krishnamoorthy, Y-K. Lau, \emph{On a conjecture about prime-detecting quasimodular forms}. Res. Math. Sci. {\bf 12}, 60 (2025).

\bibitem{KanekoZagier} M. Kaneko, D. Zagier. \emph{A generalized Jacobi theta function and quasimodular forms}.
In: The Moduli Spaces of Curves (R. Dijkgraaf, C. Faber, G. v.d. Geer, eds.), Prog. in Math. {\bf 129}, Birkhäuser, Boston (1995) 165--172.

\bibitem{MotomuraSuda} Y. Motomura, T. Suda, \emph{Congruences for the coefficients of MacMahon-like $q$-series}. Preprint, \url{https://arxiv.org/abs/2607.22349}.

\bibitem{Ono} K. Ono, \emph{Distribution of the partition function modulo $m$}.
Ann. of Math. (2) {\bf 151} (2000), no. 1, 293--307.

\bibitem{Ramanujan} S. Ramanujan, \emph{Congruence properties of partitions}. Proc. London Math. Soc. {\bf 18} (1920), xix.

\bibitem{SerreDiv} J-P. Serre, \emph{Divisibilité de certaines fonctions arithmétiques}. L'Enseignement Mathématique, {\bf 22} (3--4), 227--260.

\bibitem{Serre p-adic} J-P. Serre, \emph{Formes modulaires et fonctions z$\hat{e}$ta $p$-adiques}. Modular functions of one variable, III (Proc. Internat. Summer School, Univ. Antwerp, 1972), 1973, pp. 191--268. Lecture
Notes in Math., Vol. 350.

\bibitem{SerreChebotarev} J-P. Serre, \emph{Quelques applications du théorème de densité de Chebotarev}. Inst. Hautes Études Sci. Publ. Math. No. {\bf 54} (1981), 123--201.

\end{thebibliography}
\end{document}